\renewcommand\TAB@delim[1]{\scriptstyle#1}
\newtheorem{theorem}{Theorem}
\newtheorem{lemma}{Lemma}
\begin{document}
\begin{center}
		\vskip 1cm{\huge \bf Evaluating Six Apéry-like Series of Weight $5$}\\		
		\vskip 5mm
		Jorge Antonio González Layja\\
        Independent researcher\\
		México\\
		\noindent \href{mailto:jorgelayja16@gmail.com}{\texttt{jorgelayja16@gmail.com}}\\
		January 26, 2025
	\end{center}

\begin{abstract}
The main objective of this paper is to evaluate six new Apéry-like series of weight $5$ in closed form. These series involve harmonic numbers and exhibit the characteristic reciprocal central binomial coefficient structure. Generating functions for the inverse sine and identities related to harmonic numbers are used to link each series to a variety of integrals containing $\ln \left(\sin \left(x\right)\right)$ and $\ln \left(\cos \left(x\right)\right)$, which are evaluated using a range of methods and identities.
\end{abstract}

\noindent\textbf{Keywords}: harmonic numbers; Apéry-like series; harmonic series; central binomial coefficient; logarithmic integrals; special functions
    
\section{Introduction and Preliminaries}
The central focus of this paper is to obtain the closed forms of the following series:
$$\sum _{k=1}^{\infty }\frac{4^kH_k^{\left(3\right)}}{k^2\binom{2k}{k}},\quad\sum _{k=1}^{\infty }\frac{4^kH_k^{\left(2\right)}}{k^3\binom{2k}{k}},\quad\sum _{k=1}^{\infty }\frac{4^kH_k^2}{k^3\binom{2k}{k}}, \quad\sum _{k=1}^{\infty }\frac{4^kH_k}{k^4\binom{2k}{k}},$$
$$\sum _{k=1}^{\infty }\frac{4^kH_kH_k^{\left(2\right)}}{k^2\binom{2k}{k}},\quad \operatorname{and}\quad\sum _{k=1}^{\infty }\frac{4^kH_k^3}{k^2\binom{2k}{k}},$$
where $\binom{2k}{k}$ is the $k$-th central binomial coefficient.
\\[1.5ex]
In these series, $H_k$ denotes the $k$-th harmonic number, defined as:
$$H_k=\sum _{n=1}^k\frac{1}{n},$$
and the $k$-th generalized harmonic number of order $m$, denoted by $H_k^{\left(m\right)}$, is defined as:
$$H_k^{\left(m\right)}=\sum _{n=1}^k\frac{1}{n^m},\quad m\in \mathbb{Z}^+.$$
An Apéry-like series involving harmonic numbers, of weight $m_1+m_2+\cdots +m_n+n$, is expressed by the summand:
$$\sum _{k=1}^{\infty }\frac{4^kH_k^{\left(m_1\right)}H_k^{\left(m_2\right)}\cdots H_k^{\left(m_n\right)}}{k^n\binom{2k}{k}},$$
where $m_1,m_2,\dots ,m_n$ and $n$ are positive integers. Given this, we determine that our desired series all possess weight $5$. Next, we will introduce definitions, including special functions and other key identities, that will aid us in performing calculations and understanding the results clearly.
\\[1.5ex]
The polygamma function of order $m$ is defined as:
\begin{equation*}
\psi ^{\left(m\right)}\left(z\right)=\frac{d^m}{dz^m}\psi ^{\left(0\right)}\left(z\right)=\frac{d^{m+1}}{dz^{m+1}}\ln \left(\Gamma \left(z\right)\right),\quad m\in \mathbb{Z}_{\ge 0}\wedge \operatorname{\mathfrak{R}} \left(z\right)>0,\tag{1.1}
\label{1.1}
\end{equation*}
where $\psi ^{\left(0\right)}\left(z\right)$ is the digamma function, and $\Gamma \left(z\right)$, the gamma function, is defined as:
$$\Gamma \left(z\right)=\int _0^{\infty }x^{z-1}e^{-x}\:dx,\quad\operatorname{\mathfrak{R}} \left(z\right)>0.$$
If $m\in \mathbb{Z}^+$ and $\operatorname{\mathfrak{R}} \left(z\right)>0$, then the following integral representation for the polygamma function holds:
\begin{equation*}
\psi ^{\left(m\right)}\left(z\right)=-\int _0^1\frac{x^{z-1}\ln ^m\left(x\right)}{1-x}\:dx.\tag{1.2}
\label{1.2}
\end{equation*}
Additionally, the following identity holds:
\begin{equation*}
\psi ^{\left(m\right)}\left(z+1\right)=\left(-1\right)^{m+1}m!\left(\zeta \left(m+1\right)-H_z^{\left(m+1\right)}\right),\quad \operatorname{\mathfrak{R}}\left(z\right)>-1,\tag{1.3}
\label{1.3}
\end{equation*}
where $\zeta \left(m+1\right)$ is the Riemann zeta function, defined as:
$$\zeta \left(m\right)=\sum _{k=1}^{\infty }\frac{1}{k^m},\quad \operatorname{\mathfrak{R}}\left(m\right)>1,$$
and $H_z^{\left(m+1\right)}$ is the analytic continuation of the generalized harmonic number, defined as:
$$H_z^{\left(m\right)}=\sum _{k=1}^{\infty }\left(\frac{1}{k^m}-\frac{1}{\left(k+z\right)^m}\right),\quad z\notin \mathbb{Z}^-.$$
The beta function is defined as:
$$\operatorname{B}\left(\alpha ,\beta \right)=\frac{\Gamma \left(\alpha \right)\Gamma \left(\beta \right)}{\Gamma \left(\alpha +\beta \right)}=\int _0^1x^{\alpha -1}\left(1-x\right)^{\beta -1}\:dx,\quad \operatorname{\mathfrak{R}}\left(\alpha \right)>0\wedge \operatorname{\mathfrak{R}}\left(\beta \right)>0,$$
alternatively expressed as:
\begin{equation*}
\operatorname{B}\left(\alpha ,\beta \right)=2\int _0^{\frac{\pi }{2}}\sin ^{2\alpha -1}\left(x\right)\cos ^{2\beta -1}\left(x\right)\:dx,\quad \operatorname{\mathfrak{R}}\left(\alpha \right)>0\wedge \operatorname{\mathfrak{R}}\left(\beta \right)>0.\tag{1.4}
\label{1.4}
\end{equation*}
The polylogarithm function of order $m$ is defined as:
$$\operatorname{Li}_m\left(z\right)=\sum _{k=1}^{\infty }\frac{z^k}{k^m},\quad \left|z\right|\le 1,$$
or, alternatively,
$$\operatorname{Li}_m\left(z\right)=\int _0^z\frac{\operatorname{Li}_{m-1}\left(x\right)}{x}\:dx,\quad z\in \mathbb{C},$$
where $\operatorname{Li}_1\left(z\right)=-\ln \left(1-z\right)$.
\\[1.5ex]
Lastly, the inverse tangent is defined in the following logarithmic form:
$$\arctan \left(z\right)=-\frac{i}{2}\ln \left(\frac{\left(1+iz\right)^2}{1+z^2}\right),\quad z\in \mathbb{C}\wedge z\neq \pm i,$$
which can also be rearranged as:
\begin{equation*}
\ln \left(1+iz\right)=\frac{1}{2}\ln \left(1+z^2\right)+i\arctan \left(z\right),\quad z\in \mathbb{C}\wedge z\neq \pm i.\tag{1.5}
\label{1.5}
\end{equation*}
\section{The Lemmas and their Proofs}
\begin{lemma} \label{lma1}The following equalities hold:
\begin{alignat*}{2}
\left(\operatorname{i}\right)&\quad&&\frac{1}{2}\sum _{k=1}^{\infty }\frac{4^kx^{2k-1}}{k\binom{2k}{k}}=\frac{\arcsin \left(x\right)}{\sqrt{1-x^2}},\quad\left|x\right|<1;\\
\left(\operatorname{ii}\right)&\quad&&\frac{1}{2}\sum _{k=1}^{\infty }\frac{4^kx^{2k}}{k^2\binom{2k}{k}}=\arcsin ^2\left(x\right),\quad\left|x\right|\le 1;\\
\left(\operatorname{iii}\right)&\quad&&\frac{3}{2}\sum _{k=1}^{\infty }\frac{4^kH_k^{\left(2\right)}}{k^2\binom{2k}{k}}x^{2k}-\frac{3}{2}\sum _{k=1}^{\infty }\frac{4^k}{k^4\binom{2k}{k}}x^{2k}=\arcsin ^4\left(x\right),\quad\left|x\right|\le 1.
\end{alignat*}
\begin{proof}
The identities in points $\left(\operatorname{i}\right)$ and $\left(\operatorname{ii}\right)$ are demonstrated in \cite[pp. 331-333]{book2}. Furthermore, the identity in point $\left(\operatorname{iii}\right)$ is established in \cite[pp. 107-109]{book1}.
\end{proof}
\end{lemma}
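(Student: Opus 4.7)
The plan is to derive all three parts from a single differential relation satisfied by $\phi_n(x):=\arcsin^n(x)$. Starting from $\sqrt{1-x^2}\,\phi_n'(x) = n\,\phi_{n-1}(x)$ and differentiating once more in $x$, one obtains
\[
(1-x^2)\,\phi_n''(x) - x\,\phi_n'(x) = n(n-1)\,\phi_{n-2}(x),
\]
which will handle (i)--(iii) through the cases $n=2$ and $n=4$.

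For (ii), I specialise to $n=2$: since $\phi_0\equiv 1$, the ODE becomes $(1-x^2)\phi_2''-x\phi_2'=2$. Inserting the even ansatz $\phi_2(x)=\sum_{k\ge 1}c_k x^{2k}$, comparison of the constant term gives $c_1=1$, and comparison of $x^{2m}$ for $m\ge 1$ gives the recursion $c_{m+1}=\frac{2m^2}{(m+1)(2m+1)}\,c_m$. Using the central-binomial identity $\binom{2m+2}{m+1}=\frac{2(2m+1)}{m+1}\binom{2m}{m}$, a short induction confirms $c_k=\frac{4^k}{2k^2\binom{2k}{k}}$, which is (ii). Identity (i) then follows by differentiating (ii) term-by-term: the right-hand side produces $\frac{2\arcsin(x)}{\sqrt{1-x^2}}$, the left-hand side produces $\sum_{k\ge 1}\frac{4^k x^{2k-1}}{k\binom{2k}{k}}$, and local uniform convergence on $(-1,1)$ justifies the exchange.

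For (iii), specialise to $n=4$: $(1-x^2)\phi_4''-x\phi_4' = 12\,\phi_2$. Writing $\phi_4(x)=\sum_{m\ge 2}b_m x^{2m}$ and feeding the $\phi_2$-expansion from (ii) into the right-hand side, comparison of $x^{2m}$ coefficients yields $b_2=1$ together with the recursion $(m+1)(2m+1)\,b_{m+1} = 2m^2\,b_m + 6\,c_m$ for $m\ge 2$. It then suffices to check that the sequence
\[
d_m:=\frac{3\cdot 4^m}{2m^2\binom{2m}{m}}\!\left(H_m^{(2)}-\frac{1}{m^2}\right) = \frac{3\cdot 4^m H_{m-1}^{(2)}}{2m^2\binom{2m}{m}}
\]
predicted by the right-hand side of (iii) satisfies the same initial value $d_2=1$ and the same recursion, which reduces to a mechanical computation combining the central-binomial identity above with the telescoping $H_{m-1}^{(2)}+1/m^2 = H_m^{(2)}$. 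The main obstacle is bookkeeping rather than any deep idea: the $\phi_4$-series starts at $x^4$, so the index shifts in the ODE must be tracked carefully to avoid spurious boundary terms, and the harmonic-number structure of $d_m$ should be kept in the form $H_{m-1}^{(2)}$ throughout the recursion check so that the cancellation $H_{m-1}^{(2)}+1/m^2 = H_m^{(2)}$ emerges naturally at the end.
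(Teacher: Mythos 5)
Your argument is correct, and it is genuinely different from what the paper does: the paper offers no proof at all for Lemma~\ref{lma1}, simply citing V\u{a}lean \cite[pp.~331--333]{book2} for (i)--(ii) and Olaikhan \cite[pp.~107--109]{book1} for (iii), whereas you give a self-contained, unified derivation from the single ODE $(1-x^2)\phi_n''-x\phi_n'=n(n-1)\phi_{n-2}$ for $\phi_n=\arcsin^n$. I checked the details: the recursion $c_{m+1}=\frac{2m^2}{(m+1)(2m+1)}c_m$ is what the coefficient comparison produces, the closed form $c_k=\frac{4^k}{2k^2\binom{2k}{k}}$ satisfies it, and for (iii) the identity $(m+1)(2m+1)d_{m+1}=2m^2d_m+6c_m$ reduces exactly to $H_{m-1}^{(2)}+1/m^2=H_m^{(2)}$ as you say (note the recursion already holds at $m=1$, where it forces $b_2=1$ from $b_1=0$, so no separate base case is needed). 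The payoff of your route is that it is elementary and makes all three identities consequences of one differential relation; the cost is two small points you should make explicit: first, the power-series ansatz needs the remark that $\arcsin^n$ is analytic on $|x|<1$, even, and vanishing to order $n$ at $0$, so that the coefficient comparison determines the (unique) expansion; second, the statements (ii) and (iii) are asserted for $|x|\le 1$, and the extension to the endpoints requires Abel's theorem together with convergence there, which follows from $\frac{4^k}{\binom{2k}{k}}\sim\sqrt{\pi k}$, so the general terms are $O\bigl(k^{-3/2}\bigr)$. With those two sentences added, your proof is complete and arguably preferable to an external citation.
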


\begin{lemma} \label{lma2}Let k be a positive integer. Then, the following equalities hold:
\begin{alignat*}{2}
\left(\operatorname{i}\right)&\quad&&\int _0^1x^{k-1}\ln \left(1-x\right)\:dx=-\frac{H_k}{k};\\
\left(\operatorname{ii}\right)&\quad&&\int _0^1x^{k-1}\ln ^2\left(x\right)\ln \left(1-x\right)\:dx=-2\frac{H_k}{k^3}-2\frac{H_k^{\left(2\right)}}{k^2}-2\frac{H_k^{\left(3\right)}}{k}+2\zeta \left(2\right)\frac{1}{k^2}+2\zeta \left(3\right)\frac{1}{k};\\
\left(\operatorname{iii}\right)&\quad&&\int _0^1x^{k-1}\ln ^2\left(1-x\right)\:dx=\frac{H_k^2+H_k^{\left(2\right)}}{k};\\
\left(\operatorname{iv}\right)&\quad&&\int _0^1x^{k-1}\ln \left(x\right)\ln ^2\left(1-x\right)\:dx=-\frac{H_k^2}{k^2}-\frac{H_k^{\left(2\right)}}{k^2}-2\frac{H_k^{\left(3\right)}}{k}-2\frac{H_kH_k^{\left(2\right)}}{k}+2\zeta \left(2\right)\frac{H_k}{k}+2\zeta \left(3\right)\frac{1}{k};\\
\left(\operatorname{v}\right)&\quad&&\int _0^1x^{k-1}\ln ^3\left(1-x\right)\:dx=-\frac{H_k^3+3H_kH_k^{\left(2\right)}+2H_k^{\left(3\right)}}{k}.
\end{alignat*}
\begin{proof}
The integrals in points $\left(\operatorname{i}\right)$, $\left(\operatorname{iii}\right)$, and $\left(\operatorname{v}\right)$ are evaluated in \cite[pp. 59-62]{book2}. On the other hand, the integrals in points $\left(\operatorname{ii}\right)$ and $\left(\operatorname{iv}\right)$ are obtained by differentiating with respect to $k$ the integrals in points $\left(\operatorname{i}\right)$ and $\left(\operatorname{iii}\right)$, as demonstrated in \cite[pp. 222-227]{book3}.
\end{proof}
\end{lemma}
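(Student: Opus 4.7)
The plan is to establish (i), (iii), and (v) by direct computation on the beta-function integral $B(k,s)=\int_0^1 x^{k-1}(1-x)^{s-1}\,dx$, and then obtain (ii) and (iv) by differentiating (i) and (iii) with respect to the parameter $k$, using identities (\ref{1.2}) and (\ref{1.3}) to translate derivatives of the polygamma function into harmonic-number expressions.

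For (i), I would simply expand $\ln(1-x)=-\sum_{n\ge 1}x^n/n$, interchange summation and integration, and recognize the partial sum $\sum_{n=1}^{k-1}\tfrac{1}{n(n+k)}$ (after the shift $n\mapsto n-k$) as $H_k/k$. For (iii) and (v), I would work from $B(k,s)=\Gamma(k)\Gamma(s)/\Gamma(k+s)$: setting $f(s)=\ln\Gamma(s)-\ln\Gamma(k+s)$ gives $B(k,s)=\Gamma(k)e^{f(s)}$, so
\begin{align*}
\partial_s^{\,2}B(k,s) &= B(k,s)\bigl[f'(s)^2+f''(s)\bigr],\\
\partial_s^{\,3}B(k,s) &= B(k,s)\bigl[f'(s)^3+3f'(s)f''(s)+f'''(s)\bigr].
\end{align*}
Evaluating at $s=1$, with $f'(1)=\psi(1)-\psi(k+1)=-H_k$, $f''(1)=\psi'(1)-\psi'(k+1)=H_k^{(2)}$ (by (\ref{1.3}) with $m=1$), and $f'''(1)=\psi''(1)-\psi''(k+1)=-2H_k^{(3)}$ (by (\ref{1.3}) with $m=2$), yields (iii) and (v) directly, since $\partial_s^n B(k,s)=\int_0^1 x^{k-1}(1-x)^{s-1}\ln^n(1-x)\,dx$.

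For (ii) and (iv) I would differentiate with respect to $k$, using $\partial_k\int_0^1 x^{k-1}\varphi(x)\,dx=\int_0^1 x^{k-1}\ln(x)\varphi(x)\,dx$ under the integral sign. The key computational inputs are
$$\tfrac{d}{dk}H_k=\zeta(2)-H_k^{(2)},\qquad \tfrac{d^2}{dk^2}H_k=2H_k^{(3)}-2\zeta(3),\qquad \tfrac{d}{dk}H_k^{(2)}=2\zeta(3)-2H_k^{(3)},$$
all of which follow from $H_k=\psi(k+1)+\gamma$, $H_k^{(2)}=\zeta(2)-\psi'(k+1)$, and (\ref{1.3}). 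Differentiating $-H_k/k$ twice in $k$ then produces (ii), while differentiating $(H_k^2+H_k^{(2)})/k$ once produces (iv), each after straightforward algebraic consolidation.

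The main obstacle is really bookkeeping rather than any conceptual difficulty: keeping track of the signs and the factorial factors in (\ref{1.3}) when expressing $\psi^{(m)}(1)$ and $\psi^{(m)}(k+1)$, and carrying out the product/quotient rule cleanly so that the $\zeta(2)$ and $\zeta(3)$ terms land in the right places. Once those derivative formulas for $H_k$ and $H_k^{(2)}$ are in hand, both (ii) and (iv) reduce to a short mechanical calculation.
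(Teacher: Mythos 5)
Your proposal is correct and follows essentially the same route as the paper, which simply cites references for (i), (iii), (v) and states that (ii) and (iv) follow by differentiating (i) and (iii) with respect to $k$ — exactly the parameter differentiation you carry out, and your beta-function computation with $f'(1)=-H_k$, $f''(1)=H_k^{(2)}$, $f'''(1)=-2H_k^{(3)}$ checks out against all five closed forms. The only blemish is your description of (i): the relevant sum is the infinite telescoping sum $\sum_{n\ge 1}\frac{1}{n(n+k)}=\frac{1}{k}\sum_{n\ge 1}\left(\frac{1}{n}-\frac{1}{n+k}\right)=\frac{H_k}{k}$, not a partial sum up to $k-1$, though this does not affect the result.
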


\begin{lemma} \label{lma3}The following equalities hold:
\begin{alignat*}{2}
\left(\operatorname{i}\right)&\quad&&-\ln \left(2\right)-\sum _{k=1}^{\infty }\frac{\cos \left(2kx\right)}{k}=\ln \left(\sin \left(x\right)\right),\quad 0<x<\pi ;\\
\left(\operatorname{ii}\right)&\quad&&-\ln \left(2\right)-\sum _{k=1}^{\infty }\frac{\left(-1\right)^k\cos \left(2kx\right)}{k}=\ln \left(\cos \left(x\right)\right),\quad \left|x\right|<\frac{\pi }{2};\\
\left(\operatorname{iii}\right)&\quad&&\int _0^{\frac{\pi }{2}}x\ln \left(\sin \left(x\right)\right)\:dx=\frac{7}{16}\zeta \left(3\right)-\frac{3}{4}\ln \left(2\right)\zeta \left(2\right);\\
\left(\operatorname{iv}\right)&\quad&&\int _0^{\frac{\pi }{2}}x\ln \left(\cos \left(x\right)\right)\:dx=-\frac{7}{16}\zeta \left(3\right)-\frac{3}{4}\ln \left(2\right)\zeta \left(2\right);\\
\left(\operatorname{v}\right)&\quad&&\int _0^{\frac{\pi }{2}}x^2\ln \left(\sin \left(x\right)\right)\:dx=\frac{3\pi }{16}\zeta \left(3\right)-\frac{\pi ^3}{24}\ln \left(2\right);\\
\left(\operatorname{vi}\right)&\quad&&\int _0^{\frac{\pi }{2}}x^2\ln \left(\cos \left(x\right)\right)\:dx=-\frac{\pi }{4}\zeta \left(3\right)-\frac{\pi ^3}{24}\ln \left(2\right);\\
\left(\operatorname{vii}\right)&\quad&&\int _0^{\frac{\pi }{2}}x^3\ln \left(\sin \left(x\right)\right)\:dx=-\frac{93}{128}\zeta \left(5\right)+\frac{27}{32}\zeta \left(2\right)\zeta \left(3\right)-\frac{45}{32}\ln \left(2\right)\zeta \left(4\right);\\
\left(\operatorname{viii}\right)&\quad&&\int _0^{\frac{\pi }{2}}x^3\ln \left(\cos \left(x\right)\right)\:dx=\frac{93}{128}\zeta \left(5\right)-\frac{9}{8}\zeta \left(2\right)\zeta \left(3\right)-\frac{45}{32}\ln \left(2\right)\zeta \left(4\right).
\end{alignat*}
\begin{proof}
The identities in points $\left(\operatorname{i}\right)$ and $\left(\operatorname{ii}\right)$ are proved in \cite[pp. 135-139]{book1}.
\\[1.5ex]
The integrals in points $\left(\operatorname{iii}\right)$ through $\left(\operatorname{viii}\right)$ are all proved in the same manner: by applying the appropriate expansion, integrating by parts, and observing that for $k\in \mathbb{N}$, it follows that $\sin \left(\pi k\right)=0$ and $\cos \left(\pi k\right)=\left(-1\right)^k$. This reduces each integral to expressions involving the Riemann zeta function.
\end{proof}
\end{lemma}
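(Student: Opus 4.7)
The plan is to treat (i) and (ii) as known (they are classical Fourier expansions and the author already cites them), and then to reduce each of the integrals (iii)--(viii) to a Fourier series calculation of the same shape. Explicitly, for $n \in \{1,2,3\}$ I would substitute either (i) or (ii) into $\int_0^{\pi/2} x^n \ln(\sin x)\,dx$ or $\int_0^{\pi/2} x^n \ln(\cos x)\,dx$, interchange sum and integral (justified on $[\varepsilon,\pi/2]$ and passed to the limit since the singularity of the logarithm at $0$ is integrable against $x^n$), and thereby reduce the problem to evaluating the trigonometric moment integrals
\[
I_n(k) \;=\; \int_0^{\pi/2} x^n \cos(2kx)\,dx, \qquad k\ge 1.
\]
The elementary piece $-\ln(2)\int_0^{\pi/2} x^n\,dx$ contributes a rational multiple of $\pi^{n+1}\ln(2)$, i.e. of $\ln(2)\,\zeta(n+1)$ up to a constant.

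Next I would compute $I_n(k)$ by $n$-fold integration by parts, repeatedly differentiating $x^n$ and integrating $\cos(2kx)/\sin(2kx)$. The key observation, as the author indicates, is that every boundary term evaluated at $x=\pi/2$ contains either $\sin(\pi k)=0$ or $\cos(\pi k)=(-1)^k$, which causes the $\sin$-boundary contributions to drop out entirely and leaves only a closed expression of the form
\[
I_n(k) \;=\; \sum_{j=1}^{n+1} \frac{a_{n,j}\, (-1)^k + b_{n,j}}{k^j},
\]
with rational coefficients $a_{n,j},b_{n,j}$ that also involve powers of $\pi$. Plugging this back in and using either $\sum_{k\ge 1}$ or the alternating version against the Fourier coefficient $1/k$ produces sums $\sum (-1)^k/k^{j+1}$ and $\sum 1/k^{j+1}$, which I would recognize as $-\eta(j+1)$ and $\zeta(j+1)$ respectively.

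The last step is to convert the Dirichlet eta values via $\eta(m) = (1-2^{1-m})\zeta(m)$ and collect all terms with weights matching the claimed identities. For (iii) and (iv) only $\zeta(2)$ and $\zeta(3)$ appear; for (v) and (vi) one gets $\zeta(3)$ and $\pi^3\ln(2)$; for (vii) and (viii) one gets $\zeta(5)$, $\zeta(2)\zeta(3)$ and $\ln(2)\zeta(4)$. Interchanging $\sin \leftrightarrow \cos$ flips the Fourier sign, which replaces $\zeta(m)$-sums by $\eta(m)$-sums and accounts for the asymmetry between the two sides of each pair.

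The only real obstacle is bookkeeping: for (vii) and (viii) the three iterations of integration by parts produce several boundary pieces that have to be combined with the correct powers of $\pi/2$, and at the end one must carefully separate the $\zeta(5)$, $\zeta(2)\zeta(3)$, and $\ln(2)\zeta(4)$ contributions. There is no conceptual difficulty beyond this, which is why the author is content to describe the method and not carry out the arithmetic.
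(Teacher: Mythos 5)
Your proposal is correct and follows essentially the same route the paper sketches: substitute the Fourier expansions (i)--(ii), integrate by parts to evaluate the moments $\int_0^{\pi/2}x^n\cos(2kx)\,dx$ using $\sin(\pi k)=0$ and $\cos(\pi k)=(-1)^k$, and sum the resulting $\zeta$- and $\eta$-series. The only slip is the passing claim that the $-\ln(2)\int_0^{\pi/2}x^n\,dx$ piece is always a rational multiple of $\ln(2)\,\zeta(n+1)$, which fails for $n=2$ (where $\pi^3$ is not a rational multiple of $\zeta(3)$), but you correct this yourself when listing the constants appearing in (v) and (vi).
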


\begin{lemma} \label{lma4}The following equalities hold:
\begin{alignat*}{2}
\left(\operatorname{i}\right)&\quad&&\sum _{k=1}^{\infty }\frac{4^k}{k^2\binom{2k}{k}}=3\zeta \left(2\right);\\
\left(\operatorname{ii}\right)&\quad&&\sum _{k=1}^{\infty }\frac{4^k}{k^3\binom{2k}{k}}=-\frac{7}{2}\zeta \left(3\right)+6\ln \left(2\right)\zeta \left(2\right);\\
\left(\operatorname{iii}\right)&\quad&&\sum _{k=1}^{\infty }\frac{4^kH_k}{k^2\binom{2k}{k}}=\frac{7}{2}\zeta \left(3\right)+6\ln \left(2\right)\zeta \left(2\right);\\
\left(\operatorname{iv}\right)&\quad&&\int _0^{\frac{\pi }{2}}\cot \left(x\right)\ln ^2\left(\cos \left(x\right)\right)\:dx=\frac{1}{4}\zeta \left(3\right).
\end{alignat*}
\begin{proof}
We initiate with the series in point $\left(\operatorname{i}\right)$. To derive it, we simply use the result from point $\left(\operatorname{ii}\right)$ in Lemma \ref{lma1} and set $x=1$.
\\[1.5ex]
Next, we will use the same result for the series in point $\left(\operatorname{ii}\right)$. We have
$$\sum _{k=1}^{\infty }\frac{4^k}{k^3\binom{2k}{k}}=2\int _0^1\left(\sum _{k=1}^{\infty }\frac{4^kx^{2k}}{k^2\binom{2k}{k}}\right)\frac{1}{x}\:dx=4\int _0^1\frac{\arcsin ^2\left(x\right)}{x}\:dx.$$
Moreover, if we use integration by parts and apply the substitution $\arcsin \left(x\right)\mapsto x$, we get
$$\sum _{k=1}^{\infty }\frac{4^k}{k^3\binom{2k}{k}}=-8\int _0^{\frac{\pi }{2}}x\ln \left(\sin \left(x\right)\right)\:dx.$$
Therefore, by applying the result from point $\left(\operatorname{iii}\right)$ in Lemma \ref{lma3}, we obtain the desired closed form.
\\[1.5ex]
For the series in point $\left(\operatorname{iii}\right)$, by considering the result from point $\left(\operatorname{i}\right)$ in Lemma \ref{lma2}, it follows that
$$\sum _{k=1}^{\infty }\frac{4^kH_k}{k^2\binom{2k}{k}}=-\int _0^1\left(\sum _{k=1}^{\infty }\frac{4^kx^{k-1}}{k\binom{2k}{k}}\right)\ln \left(1-x\right)\:dx,$$
and by using the substitution $x\mapsto x^2$, applying the result from point $\left(\operatorname{i}\right)$ in Lemma \ref{lma1}, and then performing the substitution $\arcsin \left(x\right)\mapsto x$, we get
$$\sum _{k=1}^{\infty }\frac{4^kH_k}{k^2\binom{2k}{k}}=-8\int _0^{\frac{\pi }{2}}x\ln \left(\cos \left(x\right)\right)\:dx.$$
Upon employing the result from point $\left(\operatorname{iv}\right)$ in Lemma \ref{lma3}, the desired closed form follows.
\\[1.5ex]
For the integral in point $\left(\operatorname{iv}\right)$, applying the substitution $\cos \left(x\right)\mapsto \sqrt{x}$ leads to
$$\int _0^{\frac{\pi }{2}}\cot \left(x\right)\ln ^2\left(\cos \left(x\right)\right)\:dx=\frac{1}{8}\int _0^1\frac{\ln ^2\left(x\right)}{1-x}\:dx.$$
Thus, by applying the geometric series and integrating by parts to evaluate the resulting integral, the desired result is obtained.
\end{proof}
\end{lemma}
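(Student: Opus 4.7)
The plan is to reduce each of parts (i), (ii), and (iii) to one of the logarithmic integrals already evaluated in Lemma \ref{lma3}, while part (iv) is a standalone integral that I would attack by a trigonometric substitution followed by a geometric-series expansion. Part (i) is immediate: setting $x = 1$ in Lemma \ref{lma1}(ii) gives $\tfrac{1}{2}\sum_{k\ge 1} 4^k/(k^2\binom{2k}{k}) = \arcsin^2(1) = \pi^2/4 = \tfrac{3}{2}\zeta(2)$.

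For part (ii), I would promote one factor of $1/k$ to an integral. Writing $\tfrac{1}{k} = 2\int_0^1 x^{2k-1}\,dx$ and swapping summation with integration (legitimate by positivity), I obtain $4\int_0^1 \arcsin^2(x)/x\,dx$ via Lemma \ref{lma1}(ii). The substitution $x = \sin t$ turns this into $4\int_0^{\pi/2} t^2 \cot t\,dt$; integrating by parts with $u = t^2$, $dv = \cot t\,dt$ (so $v = \ln\sin t$) kills the boundary terms and produces $-8\int_0^{\pi/2} t \ln(\sin t)\,dt$, which is $-8$ times the integral in Lemma \ref{lma3}(iii).

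For part (iii), I would use Lemma \ref{lma2}(i) in the form $H_k = -k\int_0^1 x^{k-1}\ln(1-x)\,dx$, insert this representation into the series, and swap sum with integral. The substitution $x \mapsto t^2$ then produces the generating function of Lemma \ref{lma1}(i) inside the integrand, leaving $-4\int_0^1 \arcsin(t)\ln(1-t^2)/\sqrt{1-t^2}\,dt$. Putting $t = \sin u$ and using $\ln(1-\sin^2 u) = 2\ln\cos u$ reduces this to $-8\int_0^{\pi/2} u\ln(\cos u)\,du$, which is handled by Lemma \ref{lma3}(iv). For part (iv), the substitution $\cos x = \sqrt{y}$ converts $\cot x\,\ln^2(\cos x)\,dx$ into $-\tfrac{1}{8}\ln^2(y)/(1-y)\,dy$ (with reversed limits), giving $\tfrac{1}{8}\int_0^1 \ln^2(y)/(1-y)\,dy$; expanding $1/(1-y)$ as a geometric series and using $\int_0^1 y^n \ln^2(y)\,dy = 2/(n+1)^3$ yields $\tfrac{1}{8}\cdot 2\zeta(3) = \tfrac{1}{4}\zeta(3)$.

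The main obstacle here is not conceptual but clerical: tracking constants and limits through the successive substitutions in (ii) and (iii), and correctly computing the Jacobian of $\cos x = \sqrt{y}$ in (iv). Convergence of the series--integral interchanges is harmless because every summand is nonnegative, so Tonelli's theorem applies throughout.
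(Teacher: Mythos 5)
Your proposal is correct and follows essentially the same route as the paper: part (i) by setting $x=1$ in Lemma \ref{lma1}(ii), parts (ii) and (iii) by converting to $\int_0^{\pi/2}x\ln(\sin x)\,dx$ and $\int_0^{\pi/2}x\ln(\cos x)\,dx$ via Lemmas \ref{lma1} and \ref{lma2}(i) and then invoking Lemma \ref{lma3}, and part (iv) by the substitution $\cos x\mapsto\sqrt{y}$ and a geometric-series expansion. The only cosmetic difference is that in (ii) you substitute before integrating by parts while the paper does the reverse; all constants check out.
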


\begin{lemma} \label{lma5}The following equalities hold:
\begin{alignat*}{2}
\left(\operatorname{i}\right)&\quad&&\int _0^{\frac{\pi }{2}}\ln ^3\left(\sin \left(x\right)\right)\:dx=-\frac{3\pi }{4}\zeta \left(3\right)-\frac{\pi ^3}{8}\ln \left(2\right)-\frac{\pi }{2}\ln ^3\left(2\right);\\
\left(\operatorname{ii}\right)&\quad&&\int _0^{\frac{\pi }{2}}\ln \left(\sin \left(x\right)\right)\ln ^2\left(\cos \left(x\right)\right)\:dx=\frac{\pi }{8}\zeta \left(3\right)-\frac{\pi }{2}\ln ^3\left(2\right);\\
\left(\operatorname{iii}\right)&\quad&&\int _0^{\frac{\pi }{2}}x\ln ^3\left(\tan \left(x\right)\right)\:dx=\frac{93}{16}\zeta \left(5\right)+\frac{21}{16}\zeta \left(2\right)\zeta \left(3\right).
\end{alignat*}
\begin{proof}
To begin with the integral in point $\left(\operatorname{i}\right)$, we will use the alternative expression for the beta function in $\left(\ref{1.4}\right)$, where, by differentiating three times with respect to $\alpha $ and taking the limit as $\alpha,\beta  \rightarrow \frac{1}{2}$, we obtain
$$\int _0^{\frac{\pi }{2}}\ln ^3\left(\sin \left(x\right)\right)\:dx=\frac{1}{16}\lim_{\alignShortunderstack{\alpha\to&\frac{1}{2}\\\beta\to&\frac{1}{2}}}\frac{\partial ^3}{\partial \alpha ^3}\operatorname{B}\left(\alpha ,\beta \right)=\frac{1}{16}\lim_{\alignShortunderstack{\alpha\to&\frac{1}{2}\\\beta\to&\frac{1}{2}}}\frac{\partial ^3}{\partial \alpha ^3}\frac{\Gamma \left(\alpha \right)\Gamma \left(\beta \right)}{\Gamma \left(\alpha +\beta \right)},$$
and since $\frac{d}{d\alpha }\Gamma \left(\alpha \right)=\Gamma \left(\alpha \right)\psi ^{\left(0\right)}\left(\alpha \right)$, it follows that
$$\int _0^{\frac{\pi }{2}}\ln ^3\left(\sin \left(x\right)\right)\:dx=\frac{1}{16}\lim_{\alignShortunderstack{\alpha\to&\frac{1}{2}\\\beta\to&\frac{1}{2}}}\frac{\Gamma \left(\alpha \right)\Gamma \left(\beta \right)}{\Gamma \left(\alpha +\beta \right)}\left(\psi ^{\left(0\right)}\left(\alpha \right)-\psi ^{\left(0\right)}\left(\alpha +\beta \right)\right)^3$$
$$+\frac{3}{16}\lim_{\alignShortunderstack{\alpha\to&\frac{1}{2}\\\beta\to&\frac{1}{2}}}\frac{\Gamma \left(\alpha \right)\Gamma \left(\beta \right)}{\Gamma \left(\alpha +\beta \right)}\left(\psi ^{\left(0\right)}\left(\alpha \right)-\psi ^{\left(0\right)}\left(\alpha +\beta \right)\right)\left(\psi ^{\left(1\right)}\left(\alpha \right)-\psi ^{\left(1\right)}\left(\alpha +\beta \right)\right)$$
$$+\frac{1}{16}\lim_{\alignShortunderstack{\alpha\to&\frac{1}{2}\\\beta\to&\frac{1}{2}}}\frac{\Gamma \left(\alpha \right)\Gamma \left(\beta \right)}{\Gamma \left(\alpha +\beta \right)}\left(\psi ^{\left(2\right)}\left(\alpha \right)-\psi ^{\left(2\right)}\left(\alpha +\beta \right)\right).$$
Hence, by applying the limits and using the known values $\Gamma \left(\frac{1}{2}\right)=\sqrt{\pi }$, $\psi ^{\left(0\right)}\left(1\right)-\psi ^{\left(0\right)}\left(\frac{1}{2}\right)=2\ln \left(2\right)$, $\psi ^{\left(1\right)}\left(1\right)=\frac{1}{3}\psi ^{\left(1\right)}\left(\frac{1}{2}\right)=\zeta \left(2\right)=\frac{\pi ^2}{6}$, $\psi ^{\left(2\right)}\left(1\right)=\frac{1}{7}\psi ^{\left(2\right)}\left(\frac{1}{2}\right)=-2\zeta \left(3\right)$, we arrive at the closed form of the integral in point $\left(\operatorname{i}\right)$.
\\[1.5ex]
Similarly, for the integral in point $\left(\operatorname{ii}\right)$, we have
$$\int _0^{\frac{\pi }{2}}\ln \left(\sin \left(x\right)\right)\ln ^2\left(\cos \left(x\right)\right)\:dx=\frac{1}{16}\lim_{\alignShortunderstack{\alpha\to&\frac{1}{2}\\\beta\to&\frac{1}{2}}}\frac{\partial ^3}{\partial \alpha \partial \beta ^2}\operatorname{B}\left(\alpha ,\beta \right)=\frac{1}{16}\lim_{\alignShortunderstack{\alpha\to&\frac{1}{2}\\\beta\to&\frac{1}{2}}}\frac{\partial ^3}{\partial \alpha \partial \beta ^2}\frac{\Gamma \left(\alpha \right)\Gamma \left(\beta \right)}{\Gamma \left(\alpha +\beta \right)},$$
and by proceeding as shown previously, we find the desired closed form.
\\[1.5ex]
For the integral in point $\left(\operatorname{iii}\right)$, by applying the substitution $\tan \left(x\right)\mapsto x$, we get
$$\int _0^{\frac{\pi }{2}}x\ln ^3\left(\tan \left(x\right)\right)\:dx=\int _0^{\infty }\frac{\ln ^3\left(x\right)\arctan \left(x\right)}{1+x^2}\:dx.$$
Moreover, if we use that $\int _0^1\frac{x}{1+x^2t^2}\:dt=\arctan \left(x\right)$ and employ the substitution $x^2\mapsto x$, it follows that
\begin{equation*}
\int _0^{\frac{\pi }{2}}x\ln ^3\left(\tan \left(x\right)\right)\:dx=\frac{1}{16}\int _0^1\int _0^{\infty }\frac{\ln ^3\left(x\right)}{\left(1+x\right)\left(1+t^2x\right)}\:dx\:dt.\tag{2.1}
\label{2.x20}
\end{equation*}
Now, let us focus on the inner integral. Upon performing the substitution $xt\mapsto x$, we have
$$\int _0^{\infty }\frac{\ln ^3\left(x\right)}{\left(1+x\right)\left(1+t^2x\right)}\:dx=\int _0^{\infty }\frac{\ln ^3\left(\frac{x}{t}\right)}{\left(t+x\right)\left(1+tx\right)}\:dx\overset{\frac{1}{x}\mapsto x}{=}-\int _0^{\infty }\frac{\ln ^3\left(xt\right)}{\left(t+x\right)\left(1+xt\right)}\:dx,$$
which implies that
$$\int _0^{\infty }\frac{\ln ^3\left(x\right)}{\left(1+x\right)\left(1+t^2x\right)}\:dx=-3\ln \left(t\right)\int _0^{\infty }\frac{\ln ^2\left(x\right)}{\left(t+x\right)\left(1+tx\right)}\:dx-\int _0^{\infty }\frac{\ln ^3\left(t\right)}{\left(t+x\right)\left(1+tx\right)}\:dx.$$
By splitting both integrals at $x=1$, we obtain
$$\int _0^{\infty }\frac{\ln ^3\left(x\right)}{\left(1+x\right)\left(1+t^2x\right)}\:dx=-3\ln \left(t\right)\int _0^1\frac{\ln ^2\left(x\right)}{\left(t+x\right)\left(1+tx\right)}\:dx-3\ln \left(t\right)\int _1^{\infty }\frac{\ln ^2\left(x\right)}{\left(t+x\right)\left(1+tx\right)}\:dx$$
$$-\int _0^1\frac{\ln ^3\left(t\right)}{\left(t+x\right)\left(1+tx\right)}\:dx-\int _1^{\infty }\frac{\ln ^3\left(t\right)}{\left(t+x\right)\left(1+tx\right)}\:dx,$$
and if we use the substitution $\frac{1}{x}\mapsto x$ on the second and fourth resulting integrals, we arrive at
$$\int _0^{\infty }\frac{\ln ^3\left(x\right)}{\left(1+x\right)\left(1+t^2x\right)}\:dx=-6\ln \left(t\right)\int _0^1\frac{\ln ^2\left(x\right)}{\left(t+x\right)\left(1+tx\right)}\:dx-2\int _0^1\frac{\ln ^3\left(t\right)}{\left(t+x\right)\left(1+tx\right)}\:dx$$
$$=6\frac{\ln \left(t\right)}{1-t^2}\left(\int _0^1\frac{t\ln ^2\left(x\right)}{1+tx}\:dx-\int _0^1\frac{\frac{1}{t}\ln ^2\left(x\right)}{1+\frac{x}{t}}\:dx\right)+2\frac{\ln ^3\left(t\right)}{1-t^2}\left(\int _0^1\frac{t}{1+tx}\:dx-\int _0^1\frac{1}{t+x}\:dx\right)$$
$$=-12\frac{\ln \left(t\right)}{1-t^2}\left(\operatorname{Li}_3\left(-t\right)-\operatorname{Li}_3\left(-\frac{1}{t}\right)\right)+2\frac{\ln ^4\left(t\right)}{1-t^2},$$
where we used the result $\int _0^1\frac{t\ln ^2\left(x\right)}{1-tx}\:dx=2\operatorname{Li}_3\left(t\right)$, which can be derived using integration by parts. Additionally, using the identity for the trilogarithm $\operatorname{Li}_3\left(-t\right)-\operatorname{Li}_3\left(-\frac{1}{t}\right)=-\frac{1}{6}\ln ^3\left(t\right)-\zeta \left(2\right)\ln \left(t\right)$ leads to
$$\int _0^{\infty }\frac{\ln ^3\left(x\right)}{\left(1+x\right)\left(1+t^2x\right)}\:dx=4\frac{\ln ^4\left(t\right)}{1-t^2}+12\zeta \left(2\right)\frac{\ln ^2\left(t\right)}{1-t^2}.$$
Therefore, by replacing this into $\left(\ref{2.x20}\right)$, we obtain
$$\int _0^{\frac{\pi }{2}}x\ln ^3\left(\tan \left(x\right)\right)\:dx=\frac{1}{4}\int _0^1\frac{\ln ^4\left(t\right)}{1-t^2}\:dt+\frac{3}{4}\zeta \left(2\right)\int _0^1\frac{\ln ^2\left(t\right)}{1-t^2}\:dt.$$
Thus, by applying the geometric series and integration by parts to the resulting integrals, the closed form of the integral in point $\left(\operatorname{iii}\right)$ follows.
\end{proof}
\end{lemma}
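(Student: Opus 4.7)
The plan is to handle parts (i) and (ii) by differentiating the trigonometric form of the beta function in $(\ref{1.4})$, and to reduce part (iii) to elementary odd-zeta integrals by a cascade of substitutions inside a double integral.

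For parts (i) and (ii), I would differentiate $\operatorname{B}(\alpha,\beta) = 2\int_0^{\pi/2}\sin^{2\alpha-1}(x)\cos^{2\beta-1}(x)\,dx$ under the integral sign. Each $\partial_\alpha$ introduces a factor of $2\ln\sin(x)$ and each $\partial_\beta$ a factor of $2\ln\cos(x)$; setting $\alpha = \beta = 1/2$ afterwards collapses the weights. Thus for (i), the target integral equals $\tfrac{1}{16}\,\partial^3 \operatorname{B}/\partial\alpha^3$ at $(\alpha,\beta)=(1/2,1/2)$, and for (ii) it equals $\tfrac{1}{16}\,\partial^3 \operatorname{B}/(\partial\alpha\,\partial\beta^{\,2})$ at the same point. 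On the right-hand side, I would expand $\operatorname{B}(\alpha,\beta) = \Gamma(\alpha)\Gamma(\beta)/\Gamma(\alpha+\beta)$ via $\Gamma'(z)=\Gamma(z)\psi^{(0)}(z)$ and the product rule, then substitute the standard values $\Gamma(1/2)=\sqrt\pi$, $\psi^{(0)}(1)-\psi^{(0)}(1/2)=2\ln(2)$, $\psi^{(1)}(1)=\zeta(2)$, $\psi^{(1)}(1/2)=3\zeta(2)$, $\psi^{(2)}(1)=-2\zeta(3)$, $\psi^{(2)}(1/2)=-14\zeta(3)$. The stated closed forms should then drop out after a short computation.

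For part (iii), I would first apply $\tan(x)\mapsto x$ to obtain $\int_0^\infty \ln^3(x)\arctan(x)/(1+x^2)\,dx$, then represent $\arctan(x) = \int_0^1 x/(1+t^2 x^2)\,dt$ and substitute $x^2\mapsto x$ to obtain a double integral whose inner piece is $\int_0^\infty \ln^3(x)/\bigl((1+x)(1+t^2 x)\bigr)\,dx$. Performing $xt\mapsto x$ and then $1/x\mapsto x$ produces a second expression for the same inner integral; adding these two forms kills the cubic log contribution and leaves only quadratic and quartic log pieces. Splitting each resulting integral at $x=1$ and using $1/x\mapsto x$ again reduces everything to $[0,1]$; partial fractions in $(t+x)(1+tx)$ plus the identity $\int_0^1 t\ln^2(x)/(1-tx)\,dx = 2\operatorname{Li}_3(t)$ collapse the inner integral into an expression involving $\operatorname{Li}_3(-t)-\operatorname{Li}_3(-1/t)$ and $\ln^4(t)/(1-t^2)$. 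Invoking the trilogarithm inversion $\operatorname{Li}_3(-t)-\operatorname{Li}_3(-1/t) = -\tfrac{1}{6}\ln^3(t)-\zeta(2)\ln(t)$ eliminates the polylogarithms entirely, leaving the outer $t$-integral as a linear combination of $\int_0^1 \ln^4(t)/(1-t^2)\,dt$ and $\int_0^1 \ln^2(t)/(1-t^2)\,dt$, which via geometric series evaluate to $\tfrac{93}{4}\zeta(5)$ and $\tfrac{7}{4}\zeta(3)$ respectively.

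The main obstacle will be the double-integral manipulations in part (iii): the cascade of substitutions $xt\mapsto x$, $1/x\mapsto x$, splits at $x=1$, and partial fractions must be sequenced so that the cubic logarithm contribution cancels and the trilogarithm difference appears in the clean form amenable to inversion. Parts (i) and (ii) are comparatively routine once the beta-function derivative machinery is in place, though tracking signs and cross-terms among products of polygamma values still requires care.
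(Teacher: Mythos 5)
Your proposal follows essentially the same route as the paper: parts (i) and (ii) via third-order derivatives of the trigonometric beta function $(\ref{1.4})$ expanded through $\Gamma'(z)=\Gamma(z)\psi^{(0)}(z)$ with the same polygamma values at $1$ and $\tfrac12$, and part (iii) via the identical cascade ($\tan(x)\mapsto x$, the integral representation of $\arctan$, $x^2\mapsto x$, the $xt\mapsto x$ and $1/x\mapsto x$ antisymmetrization, splitting at $x=1$, partial fractions, and the trilogarithm inversion formula), ending at the same two elementary integrals $\tfrac{93}{4}\zeta(5)$ and $\tfrac{7}{4}\zeta(3)$. The plan is correct and consistent with the paper's proof.
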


\begin{lemma} \label{lma6}The following equalities hold:
\begin{alignat*}{2}
\left(\operatorname{i}\right)&\quad&&\int _0^{\frac{\pi }{2}}x^2\csc ^2\left(x\right)\ln ^2\left(\cos \left(x\right)\right)\:dx=\frac{\pi }{8}\zeta \left(3\right)+\frac{\pi ^3}{6}\ln \left(2\right)+\frac{\pi }{3}\ln ^3\left(2\right);\\
\left(\operatorname{ii}\right)&\quad&&\int _0^{\frac{\pi }{2}}x\cot \left(x\right)\ln ^2\left(\cos \left(x\right)\right)\:dx=-\frac{3\pi }{16}\zeta \left(3\right)+\frac{\pi ^3}{24}\ln \left(2\right)+\frac{\pi }{6}\ln ^3\left(2\right).
\end{alignat*}
\begin{proof}
We commence with the integral in point $\left(\operatorname{i}\right)$. Applying the substitution $x\mapsto \arctan \left(x\right)$ yields
\begin{equation*}
\int _0^{\frac{\pi }{2}}x^2\csc ^2\left(x\right)\ln ^2\left(\cos \left(x\right)\right)\:dx=\frac{1}{4}\int _0^{\infty }\frac{\arctan ^2\left(x\right)\ln ^2\left(1+x^2\right)}{x^2}\:dx.\tag{2.2}
\label{2.x14}
\end{equation*}
For the resulting integral,  we recall the identity in $\left(\ref{1.5}\right)$. By replacing $z$ with $x$, raising both sides to the power of $4$, and taking the real part, we get
\begin{equation*}
\operatorname{\mathfrak{R}} \left\{\ln ^4\left(1+ix\right)\right\}=\frac{1}{16}\ln ^4\left(1+x^2\right)-\frac{3}{2}\arctan ^2\left(x\right)\ln ^2\left(1+x^2\right)+\arctan ^4\left(x\right),\quad x\in \mathbb{R},\tag{2.3}
\label{2.x15}
\end{equation*}
and by isolating $\arctan ^2\left(x\right)\ln ^2\left(1+x^2\right)$, substituting it into $\left(\ref{2.x14}\right)$, and expanding, we arrive at
$$\int _0^{\frac{\pi }{2}}x^2\csc ^2\left(x\right)\ln ^2\left(\cos \left(x\right)\right)\:dx$$
\begin{equation*}
=\frac{1}{96}\int _0^{\infty }\frac{\ln ^4\left(1+x^2\right)}{x^2}\:dx+\frac{1}{6}\int _0^{\infty }\frac{\arctan ^4\left(x\right)}{x^2}\:dx-\frac{1}{6}\operatorname{\mathfrak{R}} \left\{\int _0^{\infty }\frac{\ln ^4\left(1+ix\right)}{x^2}\:dx\right\}.\tag{2.4}
\label{2.x16}
\end{equation*}
Let us now turn to the resulting expressions. For the first, we apply integration by parts, followed by the substitutions $x\mapsto \tan \left(x\right)$ and $x\mapsto \frac{\pi }{2}-x$, to obtain
$$\int _0^{\infty }\frac{\ln ^4\left(1+x^2\right)}{x^2}\:dx=-64\int _0^{\frac{\pi }{2}}\ln ^3\left(\sin \left(x\right)\right)\:dx.$$
Therefore, if we use the result from point $\left(\operatorname{i}\right)$ in Lemma \ref{lma5}, we immediately find
\begin{equation*}
\int _0^{\infty }\frac{\ln ^4\left(1+x^2\right)}{x^2}\:dx=48\pi \zeta \left(3\right)+8\pi ^3\ln \left(2\right)+32\pi \ln ^3\left(2\right).\tag{2.5}
\label{2.x17}
\end{equation*}
For the second, if we integrate by parts, apply the substitution $\arctan \left(x\right)\mapsto x$, and then integrate by parts again, we get
$$\int _0^{\infty }\frac{\arctan ^4\left(x\right)}{x^2}\:dx=-12\int _0^{\frac{\pi }{2}}x^2\ln \left(\sin \left(x\right)\right)\:dx,$$
and by using the result from point $\left(\operatorname{v}\right)$ in Lemma \ref{lma3}, it follows that
\begin{equation*}
\int _0^{\infty }\frac{\arctan ^4\left(x\right)}{x^2}\:dx=-\frac{9\pi }{4}\zeta \left(3\right)+\frac{\pi ^3}{2}\ln \left(2\right).\tag{2.6}
\label{2.x18}
\end{equation*}
For the third, let us first calculate the integral and then take the real part. Using the substitution $\frac{1}{1+ix}\mapsto x$, we have
$$\int _0^{\infty }\frac{\ln ^4\left(1+ix\right)}{x^2}\:dx=i\int _0^1\frac{\ln ^4\left(x\right)}{\left(1-x\right)^2}\:dx,$$
and if we employ the geometric series and integrate by parts, we obtain
$$\int _0^{\infty }\frac{\ln ^4\left(1+ix\right)}{x^2}\:dx=i\:24\zeta \left(4\right).$$
This implies that, by taking the real part, we arrive at
\begin{equation*}
\operatorname{\mathfrak{R}} \left\{\int _0^{\infty }\frac{\ln ^4\left(1+ix\right)}{x^2}\:dx\right\}=0.\tag{2.7}
\label{2.x19}
\end{equation*}
Thus, by substituting $\left(\ref{2.x17}\right)$, $\left(\ref{2.x18}\right)$, and $\left(\ref{2.x19}\right)$ into $\left(\ref{2.x16}\right)$, we find the desired result for the integral in point $\left(\operatorname{i}\right)$.
\\[1.5ex]
For the integral in point $\left(\operatorname{ii}\right)$, if we apply integration by parts, we obtain
$$\int _0^{\frac{\pi }{2}}x\cot \left(x\right)\ln ^2\left(\cos \left(x\right)\right)\:dx=\frac{1}{2}\int _0^{\frac{\pi }{2}}x^2\csc ^2\left(x\right)\ln ^2\left(\cos \left(x\right)\right)\:dx+\int _0^{\frac{\pi }{2}}x^2\ln \left(\cos \left(x\right)\right)\:dx.$$
Therefore, by using the previously calculated result from the integral in point $\left(\operatorname{i}\right)$ and the result from point $\left(\operatorname{vi}\right)$ in Lemma \ref{lma3}, we arrive at the desired closed form.
\end{proof}
\end{lemma}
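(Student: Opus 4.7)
The plan for part (i) is to convert the trigonometric integral into an algebraic one on $[0,\infty)$ via the substitution $x\mapsto\arctan(x)$. Using $\csc^2(\arctan(x))=\frac{1+x^2}{x^2}$ and $\ln(\cos(\arctan(x)))=-\frac{1}{2}\ln(1+x^2)$, the integral becomes a constant multiple of $\int_0^{\infty}\frac{\arctan^2(x)\ln^2(1+x^2)}{x^2}\,dx$. The crucial manoeuvre is to exploit identity $(\ref{1.5})$: raising $\ln(1+ix)=\frac{1}{2}\ln(1+x^2)+i\arctan(x)$ to the fourth power and taking the real part yields a linear relation expressing $\arctan^2(x)\ln^2(1+x^2)$ in terms of $\ln^4(1+x^2)$, $\arctan^4(x)$, and $\operatorname{\mathfrak{R}}\{\ln^4(1+ix)\}$, so the original integral splits into three tractable pieces.

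I would then dispatch each piece in turn. For $\int_0^{\infty}\frac{\ln^4(1+x^2)}{x^2}\,dx$, integration by parts (to absorb the $x^{-2}$) followed by $x\mapsto\tan(x)$ and $x\mapsto\frac{\pi}{2}-x$ reduces it to $\int_0^{\pi/2}\ln^3(\sin(x))\,dx$ from Lemma \ref{lma5}$(\operatorname{i})$. For $\int_0^{\infty}\frac{\arctan^4(x)}{x^2}\,dx$, substituting $\arctan(x)\mapsto x$ yields $\int_0^{\pi/2}x^4\csc^2(x)\,dx$, which collapses under two rounds of integration by parts to $\int_0^{\pi/2}x^2\ln(\sin(x))\,dx$ from Lemma \ref{lma3}$(\operatorname{v})$. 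For the third, complex piece I would attempt the M\"obius substitution $t=\frac{1}{1+ix}$; it should carry the integral to $i\int_0^1\frac{\ln^4(t)}{(1-t)^2}\,dt$, and a geometric-series expansion together with integration by parts evaluates this to $24i\,\zeta(4)$, whose real part is $0$.

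For part (ii), the plan is to reduce to part (i) by a single integration by parts. The identity $\frac{d}{dx}(x^2\cot(x))=2x\cot(x)-x^2\csc^2(x)$ lets me split $x\cot(x)=\tfrac{1}{2}x^2\csc^2(x)+\tfrac{1}{2}\frac{d}{dx}(x^2\cot(x))$; integrating the derivative term by parts against $\ln^2(\cos(x))$ (using $\cot(x)\tan(x)=1$ after differentiating the log square) should produce $\frac{1}{2}\int_0^{\pi/2}x^2\csc^2(x)\ln^2(\cos(x))\,dx+\int_0^{\pi/2}x^2\ln(\cos(x))\,dx$. Part (i) together with Lemma \ref{lma3}$(\operatorname{vi})$ then finishes the job.

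The main obstacle I foresee is the complex-log integral in part (i): guessing the right Möbius-type substitution to turn it into a real integral over $[0,1]$, and then verifying that its value is purely imaginary so the real part contributes nothing, is the non-obvious step of the whole argument. Everything else reduces to catalogued log-trig integrals, modulo routine boundary checks at $x=0$ and $x=\pi/2$ (where $\cot(x)$ or $\ln(\cos(x))$ blows up only logarithmically, so every boundary term vanishes).
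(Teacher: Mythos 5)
Your proposal follows the paper's proof essentially step for step: the same $x\mapsto\arctan(x)$ substitution, the same use of the fourth power of identity $\left(\ref{1.5}\right)$ to split the integral into the three pieces $\int_0^{\infty}\frac{\ln^4\left(1+x^2\right)}{x^2}\,dx$, $\int_0^{\infty}\frac{\arctan^4\left(x\right)}{x^2}\,dx$, and $\operatorname{\mathfrak{R}}\bigl\{\int_0^{\infty}\frac{\ln^4\left(1+ix\right)}{x^2}\,dx\bigr\}$, the same evaluations of each via Lemma \ref{lma5}$\left(\operatorname{i}\right)$, Lemma \ref{lma3}$\left(\operatorname{v}\right)$, and the substitution $\frac{1}{1+ix}\mapsto x$ giving $24i\,\zeta\left(4\right)$, and the same integration by parts reducing point $\left(\operatorname{ii}\right)$ to point $\left(\operatorname{i}\right)$ plus Lemma \ref{lma3}$\left(\operatorname{vi}\right)$. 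The argument is correct and matches the paper's, differing only in the order of the integration by parts and substitution for the second piece.
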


\begin{lemma} \label{lma7}The following equalities hold:
\begin{alignat*}{1}
\left(\operatorname{i}\right)\quad&\int _0^{\frac{\pi }{2}}x^2\cot \left(x\right)\ln ^2\left(\cos \left(x\right)\right)\:dx\\
&=-\frac{217}{64}\zeta \left(5\right)-\frac{9}{16}\zeta \left(2\right)\zeta \left(3\right)+2\operatorname{Li}_5\left(\frac{1}{2}\right)+\frac{79}{16}\ln \left(2\right)\zeta \left(4\right)+\frac{2}{3}\ln ^3\left(2\right)\zeta \left(2\right)-\frac{1}{60}\ln ^5\left(2\right);\\
\left(\operatorname{ii}\right)\quad&\int _0^{\frac{\pi }{2}}x\ln ^3\left(\sin \left(x\right)\right)\:dx\\
&=-\frac{93}{128}\zeta \left(5\right)-\frac{9}{8}\zeta \left(2\right)\zeta \left(3\right)+3\operatorname{Li}_5\left(\frac{1}{2}\right)+\frac{57}{32}\ln \left(2\right)\zeta \left(4\right)-\frac{1}{2}\ln ^3\left(2\right)\zeta \left(2\right)-\frac{1}{40}\ln ^5\left(2\right).
\end{alignat*}
\begin{proof}
Let us start with the integral in point $\left(\operatorname{i}\right)$, where we use the substitution $x\mapsto \arctan \left(x\right)$ to obtain
\begin{equation*}
\int _0^{\frac{\pi }{2}}x^2\cot \left(x\right)\ln ^2\left(\cos \left(x\right)\right)\:dx=\frac{1}{4}\int _0^{\infty }\frac{\arctan ^2\left(x\right)\ln ^2\left(1+x^2\right)}{x\left(1+x^2\right)}\:dx.\tag{2.8}
\label{2.x4}
\end{equation*}
Now, we recall the identity in $\left(\ref{2.x15}\right)$. By isolating $\arctan ^2\left(x\right)\ln ^2\left(1+x^2\right)$, substituting it into $\left(\ref{2.x4}\right)$, and expanding, we arrive at
$$\int _0^{\frac{\pi }{2}}x^2\cot \left(x\right)\ln ^2\left(\cos \left(x\right)\right)\:dx$$
\begin{equation*}
=\frac{1}{96}\int _0^{\infty }\frac{\ln ^4\left(1+x^2\right)}{x\left(1+x^2\right)}\:dx+\frac{1}{6}\int _0^{\infty }\frac{\arctan ^4\left(x\right)}{x\left(1+x^2\right)}\:dx-\frac{1}{6}\operatorname{\mathfrak{R}} \left\{\int _0^{\infty }\frac{\ln ^4\left(1+ix\right)}{x\left(1+x^2\right)}\:dx\right\}.\tag{2.9}
\label{2.x5}
\end{equation*}
We now turn to the resulting integrals. By applying the substitution $\frac{1}{1+x^2}\mapsto x$ to the first integral, we have
$$\int _0^{\infty }\frac{\ln ^4\left(1+x^2\right)}{x\left(1+x^2\right)}\:dx=\frac{1}{2}\int _0^1\frac{\ln ^4\left(x\right)}{1-x}\:dx,$$
and by using the geometric series and integrating by parts, it follows that
\begin{equation*}
\int _0^{\infty }\frac{\ln ^4\left(1+x^2\right)}{x\left(1+x^2\right)}\:dx=12\zeta \left(5\right).\tag{2.10}
\label{2.x6}
\end{equation*}
Next, we evaluate the second integral. If we use the substitution $\arctan \left(x\right)\mapsto x$ and integrate by parts, we obtain
$$\int _0^{\infty }\frac{\arctan ^4\left(x\right)}{x\left(1+x^2\right)}\:dx=-4\int _0^{\frac{\pi }{2}}x^3\ln \left(\sin \left(x\right)\right)\:dx,$$
and by applying the result from point $\left(\operatorname{vii}\right)$ in Lemma \ref{lma3}, we arrive at
\begin{equation*}
\int _0^{\infty }\frac{\arctan ^4\left(x\right)}{x\left(1+x^2\right)}\:dx=\frac{93}{32}\zeta \left(5\right)-\frac{27}{8}\zeta \left(2\right)\zeta \left(3\right)+\frac{45}{8}\ln \left(2\right)\zeta \left(4\right).\tag{2.11}
\label{2.x7}
\end{equation*}
For the third expression, we will first calculate the integral and then take the real part. By splitting it at $x=1$, we get
$$\int _0^{\infty }\frac{\ln ^4\left(1+ix\right)}{x\left(1+x^2\right)}\:dx=\int _0^1\frac{\ln ^4\left(1+ix\right)}{x\left(1+x^2\right)}\:dx+\int _1^{\infty }\frac{\ln ^4\left(1+ix\right)}{x\left(1+x^2\right)}\:dx.$$
Subsequently, by applying the substitution $\frac{1}{1+ix}\mapsto x$ to both integrals, we obtain
$$\int _0^{\infty }\frac{\ln ^4\left(1+ix\right)}{x\left(1+x^2\right)}\:dx=-\int _0^{\frac{1-i}{2}}\frac{x\ln ^4\left(x\right)}{\left(1-x\right)\left(1-2x\right)}\:dx-\int _{\frac{1-i}{2}}^1\frac{x\ln ^4\left(x\right)}{\left(1-x\right)\left(1-2x\right)}\:dx.$$
Moreover, if we expand the integrals and rearrange, we arrive at
$$\int _0^{\infty }\frac{\ln ^4\left(1+ix\right)}{x\left(1+x^2\right)}\:dx=\int _0^1\frac{\ln ^4\left(x\right)}{1-x}\:dx-\int _0^{\frac{1-i}{2}}\frac{\ln ^4\left(x\right)}{1-2x}\:dx-\int _{\frac{1-i}{2}}^1\frac{\ln ^4\left(x\right)}{1-2x}\:dx,$$
where, by repeatedly applying integration by parts to the resulting expressions, most terms vanish, and we find
$$\int _0^{\infty }\frac{\ln ^4\left(1+ix\right)}{x\left(1+x^2\right)}\:dx=24\zeta \left(5\right)-12\operatorname{Li}_5\left(2\right).$$
Therefore, if we take the real part after using the identity $\operatorname{Li}_5\left(2\right)=\operatorname{Li}_5\left(\frac{1}{2}\right)+2\ln \left(2\right)\zeta \left(4\right)+\frac{1}{3}\ln ^3\left(2\right)\zeta \left(2\right)-\frac{1}{120}\ln ^5\left(2\right)-i\frac{\pi }{24}\ln ^4\left(2\right)$ (see \cite[(1.115), p. 47]{book1}), we arrive at
\begin{equation*}
\operatorname{\mathfrak{R}} \left\{\int _0^{\infty }\frac{\ln ^4\left(1+ix\right)}{x\left(1+x^2\right)}\:dx\right\}=24\zeta \left(5\right)-12\operatorname{Li}_5\left(\frac{1}{2}\right)-24\ln \left(2\right)\zeta \left(4\right)-4\ln ^3\left(2\right)\zeta \left(2\right)+\frac{1}{10}\ln ^5\left(2\right).\tag{2.12}
\label{2.x8}
\end{equation*}
Thus, by applying $\left(\ref{2.x6}\right)$, $\left(\ref{2.x7}\right)$, and $\left(\ref{2.x8}\right)$ to $\left(\ref{2.x5}\right)$, we obtain the desired result for the integral in point $\left(\operatorname{i}\right)$.
\\[1.5ex]
For the integral in point $\left(\operatorname{ii}\right)$, we first apply the change of variables $x\mapsto \frac{\pi }{2}-x$. This gives us
$$\int _0^{\frac{\pi }{2}}x\ln ^3\left(\sin \left(x\right)\right)\:dx=\frac{\pi }{2}\int _0^{\frac{\pi }{2}}\ln ^3\left(\cos \left(x\right)\right)\:dx-\int _0^{\frac{\pi }{2}}x\ln ^3\left(\cos \left(x\right)\right)\:dx,$$
and by applying the substitution $x\mapsto \frac{\pi }{2}-x$ to the first resulting integral, and $x\mapsto \arctan \left(x\right)$ to the second, we obtain
\begin{equation*}
\int _0^{\frac{\pi }{2}}x\ln ^3\left(\sin \left(x\right)\right)\:dx=\frac{\pi }{2}\int _0^{\frac{\pi }{2}}\ln ^3\left(\sin \left(x\right)\right)\:dx+\frac{1}{8}\int _0^{\infty }\frac{\arctan \left(x\right)\ln ^3\left(1+x^2\right)}{1+x^2}\:dx.\tag{2.13}
\label{2.x9}
\end{equation*}
Next, using the identity in $\left(\ref{1.5}\right)$, with $z$ replaced by $x$, raising both sides to the fourth power, and taking the imaginary part, we get
$$\operatorname{\mathfrak{I}} \left\{\ln ^4\left(1+ix\right)\right\}=\frac{1}{2}\arctan \left(x\right)\ln ^3\left(1+x^2\right)-2\arctan ^3\left(x\right)\ln \left(1+x^2\right),\quad x\in \mathbb{R}.$$
Moreover, by isolating $\arctan \left(x\right)\ln ^3\left(1+x^2\right)$, substituting it into $\left(\ref{2.x9}\right)$, and expanding, it follows that
$$\int _0^{\frac{\pi }{2}}x\ln ^3\left(\sin \left(x\right)\right)\:dx$$
\begin{equation*}
=\frac{\pi }{2}\int _0^{\frac{\pi }{2}}\ln ^3\left(\sin \left(x\right)\right)\:dx+\frac{1}{2}\int _0^{\infty }\frac{\arctan ^3\left(x\right)\ln \left(1+x^2\right)}{1+x^2}\:dx+\frac{1}{4}\operatorname{\mathfrak{I}} \left\{\int _0^{\infty }\frac{\ln ^4\left(1+ix\right)}{1+x^2}\:dx\right\}.\tag{2.14}
\label{2.x10}
\end{equation*}
Now, let us focus on the resulting integrals. For the first, we recall the result from point $\left(\operatorname{i}\right)$ in Lemma \ref{lma5}, from which we immediately have
\begin{equation*}
\frac{\pi }{2}\int _0^{\frac{\pi }{2}}\ln ^3\left(\sin \left(x\right)\right)\:dx=-\frac{9}{4}\zeta \left(2\right)\zeta \left(3\right)-\frac{45}{8}\ln \left(2\right)\zeta \left(4\right)-\frac{3}{2}\ln ^3\left(2\right)\zeta \left(2\right).\tag{2.15}
\label{2.x11}
\end{equation*}
For the second, by applying the substitution $\arctan \left(x\right)\mapsto x$, we obtain
$$\int _0^{\infty }\frac{\arctan ^3\left(x\right)\ln \left(1+x^2\right)}{1+x^2}\:dx=-2\int _0^{\frac{\pi }{2}}x^3\ln \left(\cos \left(x\right)\right)\:dx.$$
Therefore, by using the result from point $\left(\operatorname{viii}\right)$ in Lemma \ref{lma3}, we arrive at
\begin{equation*}
\int _0^{\infty }\frac{\arctan ^3\left(x\right)\ln \left(1+x^2\right)}{1+x^2}\:dx=-\frac{93}{64}\zeta \left(5\right)+\frac{9}{4}\zeta \left(2\right)\zeta \left(3\right)+\frac{45}{16}\ln \left(2\right)\zeta \left(4\right).\tag{2.16}
\label{2.x12}
\end{equation*}
For the third, we first evaluate the integral and then take the imaginary part. Consequently, if we split at $x=1$ and employ the substitution $\frac{1}{1+ix}\mapsto x$, we get
$$\int _0^{\infty }\frac{\ln ^4\left(1+ix\right)}{1+x^2}\:dx=i\int _0^{\frac{1-i}{2}}\frac{\ln ^4\left(x\right)}{1-2x}\:dx+i\int _{\frac{1-i}{2}}^1\frac{\ln ^4\left(x\right)}{1-2x}\:dx,$$
and by repeatedly applying integration by parts to evaluate the expressions, we find
$$\int _0^{\infty }\frac{\ln ^4\left(1+ix\right)}{1+x^2}\:dx=i\:12\operatorname{Li}_5\left(2\right).$$
Thus, if we use the identity $\operatorname{Li}_5\left(2\right)=\operatorname{Li}_5\left(\frac{1}{2}\right)+2\ln \left(2\right)\zeta \left(4\right)+\frac{1}{3}\ln ^3\left(2\right)\zeta \left(2\right)-\frac{1}{120}\ln ^5\left(2\right)-i\frac{\pi }{24}\ln ^4\left(2\right)$ (see \cite[(1.115), p. 47]{book1}) and then take the imaginary part, we obtain
\begin{equation*}
\operatorname{\mathfrak{I}} \left\{\int _0^{\infty }\frac{\ln ^4\left(1+ix\right)}{1+x^2}\:dx\right\}=12\operatorname{Li}_5\left(\frac{1}{2}\right)+24\ln \left(2\right)\zeta \left(4\right)+4\ln ^3\left(2\right)\zeta \left(2\right)-\frac{1}{10}\ln ^5\left(2\right).\tag{2.17}
\label{2.x13}
\end{equation*}
Hence, by applying $\left(\ref{2.x11}\right)$, $\left(\ref{2.x12}\right)$, and $\left(\ref{2.x13}\right)$ to $\left(\ref{2.x10}\right)$, we arrive at the desired result for the integral in point $\left(\operatorname{ii}\right)$.
\end{proof}
\end{lemma}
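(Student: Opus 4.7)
The plan is to treat both integrals in parallel via the same device: push each integral onto $(0,\infty)$ by the substitution $x\mapsto\arctan(x)$, then exploit identity $(\ref{1.5})$ raised to the fourth power to introduce a complex logarithm whose real or imaginary part contains the desired mixed product $\arctan^j(x)\ln^k(1+x^2)$. This converts the problem into three cleaner integrals: a purely real weight-5 logarithmic integral (straightforward $\zeta$-series), a pure $\arctan$ integral (reducible by parts to one of the $\int_0^{\pi/2} x^3\ln(\sin x)\,dx$ type integrals of Lemma \ref{lma3}), and one genuinely complex integral involving $\ln^4(1+ix)$, which will be the carrier of the $\operatorname{Li}_5$-term.

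For part (i), after $x\mapsto\arctan(x)$ the integral becomes $\tfrac14\int_0^\infty \arctan^2(x)\ln^2(1+x^2)/(x(1+x^2))\,dx$. Taking the real part of $\ln^4(1+ix)$ via $(\ref{1.5})$ gives an identity of the shape $\Re\{\ln^4(1+ix)\}=\tfrac{1}{16}\ln^4(1+x^2)-\tfrac32\arctan^2(x)\ln^2(1+x^2)+\arctan^4(x)$, which I would solve for the cross term to obtain a three-piece decomposition. The first piece, $\int_0^\infty\ln^4(1+x^2)/(x(1+x^2))\,dx$, collapses under $u=1/(1+x^2)$ to $\tfrac12\int_0^1\ln^4(x)/(1-x)\,dx=12\zeta(5)$. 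The second, $\int_0^\infty\arctan^4(x)/(x(1+x^2))\,dx$, integrates by parts back to $-4\int_0^{\pi/2} x^3\ln(\sin x)\,dx$, supplied by Lemma \ref{lma3}(vii). The third, $\Re\{\int_0^\infty\ln^4(1+ix)/(x(1+x^2))\,dx\}$, I would attack by splitting at $x=1$, applying $1/(1+ix)\mapsto x$ on both pieces (this rotates the path endpoint to $(1-i)/2$), combining partial fractions, repeated integration by parts to annihilate most boundary terms, and finally invoking the identity $\operatorname{Li}_5(2)=\operatorname{Li}_5(1/2)+2\ln(2)\zeta(4)+\tfrac13\ln^3(2)\zeta(2)-\tfrac{1}{120}\ln^5(2)-i\tfrac{\pi}{24}\ln^4(2)$ to take the real part.

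For part (ii), I would first apply the reflection $x\mapsto\pi/2-x$ to trade $x\ln^3(\sin x)$ for a combination of $\int_0^{\pi/2}\ln^3(\sin x)\,dx$ (already computed in Lemma \ref{lma5}(i)) and $\int_0^{\pi/2} x\ln^3(\cos x)\,dx$. The latter, via $x\mapsto\arctan(x)$, becomes $\tfrac18\int_0^\infty\arctan(x)\ln^3(1+x^2)/(1+x^2)\,dx$, and now taking the imaginary part of $\ln^4(1+ix)$ yields an identity expressing $\arctan(x)\ln^3(1+x^2)$ in terms of $\arctan^3(x)\ln(1+x^2)$ plus $\Im\{\ln^4(1+ix)\}$. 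The arctan-cubed piece reduces by $\arctan(x)\mapsto x$ to $-2\int_0^{\pi/2} x^3\ln(\cos x)\,dx$, available from Lemma \ref{lma3}(viii). The complex piece $\int_0^\infty\ln^4(1+ix)/(1+x^2)\,dx$ I would again split at $x=1$ and treat by the same $1/(1+ix)\mapsto x$ substitution, repeated integration by parts, and the $\operatorname{Li}_5(2)$ identity — this time keeping the imaginary part, which is exactly where $\operatorname{Li}_5(1/2)$ enters.

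The main obstacle is the careful handling of the integrals $\int\ln^4(1+ix)/\cdots\,dx$: one must track the branch of $\log$, perform the substitution $1/(1+ix)\mapsto x$ with the correct oriented endpoint $(1-i)/2$, and verify that the chain of integrations-by-parts on the two complex pieces conspires so that all intermediate boundary and polynomial pieces cancel, leaving a clean $\operatorname{Li}_5(2)$ term. The $\operatorname{Li}_5(2)\to\operatorname{Li}_5(1/2)$ conversion is the step that both introduces the $\operatorname{Li}_5(1/2)$ in the closed form and distributes the $\ln^5(2)$, $\ln^3(2)\zeta(2)$, and $\ln(2)\zeta(4)$ contributions; everything else is essentially bookkeeping against Lemmas \ref{lma3} and \ref{lma5}.
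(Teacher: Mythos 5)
Your proposal matches the paper's proof essentially step for step: the same $x\mapsto\arctan(x)$ substitutions, the same use of the real and imaginary parts of $\ln^4(1+ix)$ from $(\ref{1.5})$ to isolate the cross terms, the same three-piece decompositions resolved via Lemma \ref{lma3}(vii)--(viii), Lemma \ref{lma5}(i), the split at $x=1$ with $\tfrac{1}{1+ix}\mapsto x$, and the $\operatorname{Li}_5(2)\to\operatorname{Li}_5(\tfrac12)$ conversion. The approach is correct and coincides with the paper's.
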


\begin{lemma} \label{lma8}The following equalities hold:
\begin{alignat*}{1}
\left(\operatorname{i}\right)\quad&\int _0^{\frac{\pi }{2}}x\ln \left(\sin \left(x\right)\right)\ln ^2\left(\cos \left(x\right)\right)\:dx\\
&=\frac{155}{128}\zeta \left(5\right)+\frac{13}{32}\zeta \left(2\right)\zeta \left(3\right)-\operatorname{Li}_5\left(\frac{1}{2}\right)-\frac{49}{32}\ln \left(2\right)\zeta \left(4\right)-\frac{5}{6}\ln ^3\left(2\right)\zeta \left(2\right)+\frac{1}{120}\ln ^5\left(2\right);\\
\left(\operatorname{ii}\right)\quad&\int _0^{\frac{\pi }{2}}x\ln ^2\left(\sin \left(x\right)\right)\ln \left(\cos \left(x\right)\right)\:dx\\
&=-\frac{155}{128}\zeta \left(5\right)-\frac{1}{32}\zeta \left(2\right)\zeta \left(3\right)+\operatorname{Li}_5\left(\frac{1}{2}\right)+\frac{49}{32}\ln \left(2\right)\zeta \left(4\right)-\frac{2}{3}\ln ^3\left(2\right)\zeta \left(2\right)-\frac{1}{120}\ln ^5\left(2\right).
\end{alignat*}
\begin{proof}
We begin by denoting the integral in point $\left(\operatorname{i}\right)$ as $I$ and the integral in point $\left(\operatorname{ii}\right)$ as $J$. Taking their difference yields
\begin{equation*}
I-J=\int _0^{\frac{\pi }{2}}x\ln \left(\sin \left(x\right)\right)\ln ^2\left(\cos \left(x\right)\right)\:dx-\int _0^{\frac{\pi }{2}}x\ln ^2\left(\sin \left(x\right)\right)\ln \left(\cos \left(x\right)\right)\:dx.\tag{2.18}
\label{2.x1}
\end{equation*}
By using the identity $ab^2-a^2b=\frac{1}{3}\left(a-b\right)^3-\frac{1}{3}a^3+\frac{1}{3}b^3$, with $a=\ln \left(\sin \left(x\right)\right)$ and $b=\ln \left(\cos \left(x\right)\right)$, we have
$$I-J=\frac{1}{3}\int _0^{\frac{\pi }{2}}x\ln ^3\left(\tan \left(x\right)\right)\:dx-\frac{1}{3}\int _0^{\frac{\pi }{2}}x\ln ^3\left(\sin \left(x\right)\right)\:dx+\frac{1}{3}\int _0^{\frac{\pi }{2}}x\ln ^3\left(\cos \left(x\right)\right)\:dx,$$
and if we apply the substitution $x\mapsto \frac{\pi }{2}-x$ to the rightmost integral, we get
$$I-J=\frac{1}{3}\int _0^{\frac{\pi }{2}}x\ln ^3\left(\tan \left(x\right)\right)\:dx-\frac{2}{3}\int _0^{\frac{\pi }{2}}x\ln ^3\left(\sin \left(x\right)\right)\:dx+\frac{\pi }{6}\int _0^{\frac{\pi }{2}}\ln ^3\left(\sin \left(x\right)\right)\:dx.$$
Therefore, by using the results from points $\left(\operatorname{i}\right)$ and $\left(\operatorname{iii}\right)$ in Lemma \ref{lma5}, and from point $\left(\operatorname{ii}\right)$ in Lemma \ref{lma7}, we obtain
\begin{equation*}
I-J=\frac{155}{64}\zeta \left(5\right)+\frac{7}{16}\zeta \left(2\right)\zeta \left(3\right)-2\operatorname{Li}_5\left(\frac{1}{2}\right)-\frac{49}{16}\ln \left(2\right)\zeta \left(4\right)-\frac{1}{6}\ln ^3\left(2\right)\zeta \left(2\right)+\frac{1}{60}\ln ^5\left(2\right).\tag{2.19}
\label{2.x2}
\end{equation*}
Furthermore, if we apply the substitution $x\mapsto \frac{\pi }{2}-x$ to $J$ in $\left(\ref{2.x1}\right)$, we have
$$I-J=2\int _0^{\frac{\pi }{2}}x\ln \left(\sin \left(x\right)\right)\ln ^2\left(\cos \left(x\right)\right)\:dx-\frac{\pi }{2}\int _0^{\frac{\pi }{2}}\ln \left(\sin \left(x\right)\right)\ln ^2\left(\cos \left(x\right)\right)\:dx,$$
and since $2I$ appears on the right-hand side, along with the integral from point $\left(\operatorname{ii}\right)$ in Lemma \ref{lma5}, it follows that
\begin{equation*}
I+J=\frac{3}{8}\zeta \left(2\right)\zeta \left(3\right)-\frac{3}{2}\ln ^3\left(2\right)\zeta \left(2\right).\tag{2.20}
\label{2.x3}
\end{equation*}
Thus, by solving for $I$ and $J$ using $\left(\ref{2.x2}\right)$ and $\left(\ref{2.x3}\right)$, we obtain the desired closed forms.
\end{proof}
\end{lemma}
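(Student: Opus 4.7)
The plan is to set up a $2 \times 2$ linear system in $I$ (the integral in point $\left(\operatorname{i}\right)$) and $J$ (the integral in point $\left(\operatorname{ii}\right)$), obtained by separately evaluating $I-J$ and $I+J$, and then solve for each.

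For $I+J$, the natural move is the reflection $x\mapsto \frac{\pi}{2}-x$ applied only to $J$. This swaps $\sin$ with $\cos$ and introduces a factor $\frac{\pi}{2}-x$, so $J=\frac{\pi}{2}\int_0^{\pi/2}\ln(\sin x)\ln^2(\cos x)\,dx-I$. Rearranging gives $I+J=\frac{\pi}{2}\int_0^{\pi/2}\ln(\sin x)\ln^2(\cos x)\,dx$, and the right-hand side is supplied in closed form by Lemma \ref{lma5}(ii). This will produce the symmetric combination with only $\zeta(2)\zeta(3)$ and $\ln^3(2)\zeta(2)$ terms.

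For $I-J$, the idea is to exploit the algebraic identity $ab^2-a^2b=\tfrac{1}{3}(a-b)^3-\tfrac{1}{3}a^3+\tfrac{1}{3}b^3$ with $a=\ln(\sin x)$ and $b=\ln(\cos x)$. Since $a-b=\ln(\tan x)$, this rewrites $I-J$ as $\tfrac{1}{3}\int_0^{\pi/2} x\ln^3(\tan x)\,dx-\tfrac{1}{3}\int_0^{\pi/2} x\ln^3(\sin x)\,dx+\tfrac{1}{3}\int_0^{\pi/2} x\ln^3(\cos x)\,dx$. Applying $x\mapsto\frac{\pi}{2}-x$ to the last integral converts $\cos$ to $\sin$ and produces the pair $\frac{\pi}{2}\int_0^{\pi/2}\ln^3(\sin x)\,dx-\int_0^{\pi/2} x\ln^3(\sin x)\,dx$, which combines with the middle term. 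The three resulting pieces, namely $\int_0^{\pi/2} x\ln^3(\tan x)\,dx$, $\int_0^{\pi/2} x\ln^3(\sin x)\,dx$, and $\int_0^{\pi/2}\ln^3(\sin x)\,dx$, are all already in hand from Lemma \ref{lma5}(iii), Lemma \ref{lma7}(ii), and Lemma \ref{lma5}(i), respectively. Substituting and simplifying yields a closed form for $I-J$.

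With both $I+J$ and $I-J$ known in closed form, solving the linear system $I=\tfrac{1}{2}((I+J)+(I-J))$ and $J=\tfrac{1}{2}((I+J)-(I-J))$ finishes the proof. There is no real obstacle here since every ingredient has been prepared; the only delicate step is the bookkeeping to ensure the $\operatorname{Li}_5(\tfrac{1}{2})$, $\zeta(5)$, $\ln(2)\zeta(4)$, and $\ln^5(2)$ coefficients combine into the asymmetric halves shown in the statement, and that the symmetric $\zeta(2)\zeta(3)$ and $\ln^3(2)\zeta(2)$ contributions split correctly between $I$ and $J$.
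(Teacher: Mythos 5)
Your proposal is correct and follows essentially the same route as the paper: the same $I\pm J$ linear system, the same cubic identity $ab^2-a^2b=\tfrac{1}{3}(a-b)^3-\tfrac{1}{3}a^3+\tfrac{1}{3}b^3$ producing the $\ln^3(\tan x)$ integral for $I-J$, and the same reflection $x\mapsto\tfrac{\pi}{2}-x$ reducing $I+J$ to Lemma \ref{lma5}(ii). The cited ingredients (Lemma \ref{lma5}(i),(ii),(iii) and Lemma \ref{lma7}(ii)) are exactly those used in the paper, so the argument goes through as you describe.
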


\section{The Main Series and their Proofs}
\begin{theorem}The following equalities hold:
\begin{alignat*}{1}
\left(\operatorname{i}\right)\quad&\sum _{k=1}^{\infty }\frac{4^kH_k^{\left(3\right)}}{k^2\binom{2k}{k}}\\
&=\frac{217}{8}\zeta \left(5\right)-\frac{9}{2}\zeta \left(2\right)\zeta \left(3\right)-16\operatorname{Li}_5\left(\frac{1}{2}\right)-\frac{19}{2}\ln \left(2\right)\zeta \left(4\right)+\frac{8}{3}\ln ^3\left(2\right)\zeta \left(2\right)+\frac{2}{15}\ln ^5\left(2\right);\\
\left(\operatorname{ii}\right)\quad&\sum _{k=1}^{\infty }\frac{4^kH_k^{\left(2\right)}}{k^3\binom{2k}{k}}\\
&=\frac{31}{4}\zeta \left(5\right)+\frac{3}{2}\zeta \left(2\right)\zeta \left(3\right)-16\operatorname{Li}_5\left(\frac{1}{2}\right)-2\ln \left(2\right)\zeta \left(4\right)+\frac{8}{3}\ln ^3\left(2\right)\zeta \left(2\right)+\frac{2}{15}\ln ^5\left(2\right);\\
\left(\operatorname{iii}\right)\quad&\sum _{k=1}^{\infty }\frac{4^kH_k^2}{k^3\binom{2k}{k}}\\
&=-62\zeta \left(5\right)-\frac{21}{2}\zeta \left(2\right)\zeta \left(3\right)+48\operatorname{Li}_5\left(\frac{1}{2}\right)+81\ln \left(2\right)\zeta \left(4\right)+8\ln ^3\left(2\right)\zeta \left(2\right)-\frac{2}{5}\ln ^5\left(2\right);\\
\left(\operatorname{iv}\right)\quad&\sum _{k=1}^{\infty }\frac{4^kH_k}{k^4\binom{2k}{k}}\\
&=-\frac{31}{2}\zeta \left(5\right)+3\zeta \left(2\right)\zeta \left(3\right)+16\operatorname{Li}_5\left(\frac{1}{2}\right)+2\ln \left(2\right)\zeta \left(4\right)+\frac{16}{3}\ln ^3\left(2\right)\zeta \left(2\right)-\frac{2}{15}\ln ^5\left(2\right);\\
\left(\operatorname{v}\right)\quad&\sum _{k=1}^{\infty }\frac{4^kH_kH_k^{\left(2\right)}}{k^2\binom{2k}{k}}\\
&=-\frac{155}{8}\zeta \left(5\right)+9\zeta \left(2\right)\zeta \left(3\right)+16\operatorname{Li}_5\left(\frac{1}{2}\right)+\frac{19}{2}\ln \left(2\right)\zeta \left(4\right)+\frac{16}{3}\ln ^3\left(2\right)\zeta \left(2\right)-\frac{2}{15}\ln ^5\left(2\right);\\
\left(\operatorname{vi}\right)\quad&\sum _{k=1}^{\infty }\frac{4^kH_k^3}{k^2\binom{2k}{k}}\\
&=-\frac{155}{8}\zeta \left(5\right)+18\zeta \left(2\right)\zeta \left(3\right)+80\operatorname{Li}_5\left(\frac{1}{2}\right)+\frac{455}{2}\ln \left(2\right)\zeta \left(4\right)+\frac{32}{3}\ln ^3\left(2\right)\zeta \left(2\right)-\frac{2}{3}\ln ^5\left(2\right).
\end{alignat*}
\end{theorem}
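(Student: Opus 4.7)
The plan is to reduce each of the six series to a finite linear combination of the integrals evaluated in Lemmas \ref{lma3}--\ref{lma8}. For brevity, let $L_{i}(j)$ denote the value computed in Lemma $i$, point $(j)$; e.g.\ $L_{8}(ii)=\int_{0}^{\pi/2}t\ln^{2}(\sin t)\ln(\cos t)\,dt$. The general recipe is: pair one of the three generating functions in Lemma \ref{lma1} with one of the beta-type integrals in Lemma \ref{lma2} (or an integral representation of a polygamma coming from $\left(\ref{1.2}\right)$ and $\left(\ref{1.3}\right)$); swap sum and integral; and apply $x=\sin t$ or $\sqrt{x}=\sin t$ to reach a standard $\ln(\sin t)$/$\ln(\cos t)$ integral on $[0,\pi/2]$. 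This produces a linear system in the six unknown series, which I then solve.

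I would begin with series (ii) and (iii). Dividing Lemma \ref{lma1}(iii) by $x$ and integrating on $[0,1]$ expresses series (ii) minus $S_{5}:=\sum 4^{k}/(k^{5}\binom{2k}{k})$ in terms of $\int_{0}^{1}\arcsin^{4}(x)/x\,dx$, which equals $-4L_{3}(vii)$ after $x=\sin t$ and one integration by parts. To evaluate $S_{5}$, I would multiply Lemma \ref{lma1}(i) by $\ln^{3}(x)$, integrate on $[0,1]$, and substitute $x=\sin t$, obtaining $S_{5}=-\tfrac{16}{3}L_{7}(ii)$. For series (iii), I would multiply Lemma \ref{lma1}(ii) by $\ln^{2}(1-x^{2})/x$, integrate, apply Lemma \ref{lma2}(iii) after $y=x^{2}$ on the sum side, and use $x=\sin t$ on the integral side, yielding series (iii) $+$ series (ii) $=16L_{7}(i)$.

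For series (i), I would use $H_{k}^{(3)}=\zeta(3)-\tfrac{1}{2}\int_{0}^{1}\tfrac{x^{k}\ln^{2}(x)}{1-x}\,dx$ coming from $\left(\ref{1.2}\right)$ and $\left(\ref{1.3}\right)$. Swapping sum and integral, invoking Lemma \ref{lma1}(ii), and substituting $\sqrt{x}=\sin t$ gives series (i) $=3\zeta(2)\zeta(3)-8\int_{0}^{\pi/2}t^{2}\tan(t)\ln^{2}(\sin t)\,dt$. I would then apply the involution $t\mapsto\pi/2-t$, expand $(\pi/2-s)^{2}$, and recognize the three pieces as $(\pi^{2}/4)L_{4}(iv)$, $-\pi L_{6}(ii)$, and $L_{7}(i)$; the $3\zeta(2)\zeta(3)$ cancels cleanly against $-2\pi^{2}L_{4}(iv)=-3\zeta(2)\zeta(3)$, leaving series (i) $=8\pi L_{6}(ii)-8L_{7}(i)$. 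Finally, for series (iv), (v), (vi), I would multiply Lemma \ref{lma1}(i) by $\ln^{2}(x)\ln(1-x^{2})$, $\ln(x)\ln^{2}(1-x^{2})$, and $\ln^{3}(1-x^{2})$ respectively, integrate on $[0,1]$, apply Lemma \ref{lma2}(ii), (iv), (v) after $y=x^{2}$ on the sum side, and reduce the integral sides via $x=\sin t$ to multiples of $L_{8}(ii)$, $L_{8}(i)$, and $\int_{0}^{\pi/2}t\ln^{3}(\cos t)\,dt=(\pi/2)L_{5}(i)-L_{7}(ii)$. Each of the three identities is a linear relation among the six target series in which all other terms have already been computed, so series (iv), (v), (vi) follow in succession.

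The main obstacle is series (i): the naive integration by parts of $\int t^{2}\tan(t)\ln^{2}(\sin t)\,dt$ leads to the mixed integral $\int t^{2}\cot(t)\ln(\sin t)\ln(\cos t)\,dt$, which is not covered by any lemma and is circular under the obvious algebraic reductions (for instance via $2\ln\sin\ln\cos=\ln^{2}\sin+\ln^{2}\cos-\ln^{2}\tan$, whose substitution into either of the two natural expressions for the mixed integral yields a tautology). The trick is to apply $t\mapsto\pi/2-t$ \emph{immediately} to $\int t^{2}\tan(t)\ln^{2}(\sin t)\,dt$, sidestepping the mixed integral entirely and exposing it as a combination of the known $L_{4}(iv)$, $L_{6}(ii)$, $L_{7}(i)$.
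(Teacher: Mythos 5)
Your proposal is correct and follows essentially the same route as the paper: the same pairings of the Lemma \ref{lma1} generating functions with the Lemma \ref{lma2} integrals (and the polygamma representation for $H_k^{(3)}$), the same reduction of each series to the logarithmic--trigonometric integrals of Lemmas \ref{lma3}--\ref{lma8}, and in particular the same key step for series $\left(\operatorname{i}\right)$ of applying $x\mapsto\frac{\pi}{2}-x$ to $\int_0^{\pi/2}x^2\tan\left(x\right)\ln^2\left(\sin\left(x\right)\right)\,dx$ so that it splits into the known $\cot$-integrals. The only cosmetic difference is your derivation of $\sum_{k\ge 1}4^k/(k^5\binom{2k}{k})$ directly from Lemma \ref{lma1}$\left(\operatorname{i}\right)$ rather than from Lemma \ref{lma1}$\left(\operatorname{ii}\right)$ followed by integration by parts; both yield $-\tfrac{16}{3}\int_0^{\pi/2}x\ln^3\left(\sin\left(x\right)\right)\,dx$.
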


\begin{proof}
For the series in point $\left(\operatorname{i}\right)$, we begin by using the identity in $\left(\ref{1.3}\right)$. By letting $m=2$ and $z\mapsto k$, we obtain
$$\psi ^{\left(2\right)}\left(k+1\right)=-2\zeta \left(3\right)+2H_k^{\left(3\right)},$$
and if we isolate $H_k^{\left(3\right)}$, multiply both sides by $\frac{4^k}{k^2\binom{2k}{k}}$, and sum from $k=1$ to $\infty$, we get
$$\sum _{k=1}^{\infty }\frac{4^kH_k^{\left(3\right)}}{k^2\binom{2k}{k}}=\zeta \left(3\right)\sum _{k=1}^{\infty }\frac{4^k}{k^2\binom{2k}{k}}+\frac{1}{2}\sum _{k=1}^{\infty }\frac{4^k}{k^2\binom{2k}{k}}\psi ^{\left(2\right)}\left(k+1\right).$$
By noting that the first series on the right-hand side is the result from point $\left(\operatorname{i}\right)$ in Lemma \ref{lma4}, and applying the integral definition in $\left(\ref{1.2}\right)$ for $\psi ^{\left(2\right)}\left(k+1\right)$, we arrive at
$$\sum _{k=1}^{\infty }\frac{4^kH_k^{\left(3\right)}}{k^2\binom{2k}{k}}=3\zeta \left(2\right)\zeta \left(3\right)-\frac{1}{2}\int _0^1\left(\sum _{k=1}^{\infty }\frac{4^kx^k}{k^2\binom{2k}{k}}\right)\frac{\ln ^2\left(x\right)}{1-x}\:dx.$$
Substituting $x\mapsto x^2$, applying the result from point $\left(\operatorname{ii}\right)$ in Lemma \ref{lma1}, and then substituting $\arcsin \left(x\right)\mapsto x$, leads to
$$\sum _{k=1}^{\infty }\frac{4^kH_k^{\left(3\right)}}{k^2\binom{2k}{k}}=3\zeta \left(2\right)\zeta \left(3\right)-8\int _0^{\frac{\pi }{2}}x^2\tan \left(x\right)\ln ^2\left(\sin \left(x\right)\right)\:dx,$$
and if we perform the change of variables $x\mapsto \frac{\pi }{2}-x$, we obtain
$$\sum _{k=1}^{\infty }\frac{4^kH_k^{\left(3\right)}}{k^2\binom{2k}{k}}=3\zeta \left(2\right)\zeta \left(3\right)-12\zeta \left(2\right)\int _0^{\frac{\pi }{2}}\cot \left(x\right)\ln ^2\left(\cos \left(x\right)\right)\:dx+8\pi \int _0^{\frac{\pi }{2}}x\cot \left(x\right)\ln ^2\left(\cos \left(x\right)\right)\:dx$$
$$-8\int _0^{\frac{\pi }{2}}x^2\cot \left(x\right)\ln ^2\left(\cos \left(x\right)\right)\:dx.$$
Thus, by applying point $\left(\operatorname{iv}\right)$ from Lemma \ref{lma4}, point $\left(\operatorname{ii}\right)$ from Lemma \ref{lma6}, and point $\left(\operatorname{i}\right)$ from Lemma \ref{lma7}, we obtain the desired result for the series in point $\left(\operatorname{i}\right)$.
\\[1.5ex]
For the series in point $\left(\operatorname{ii}\right)$, we start with the generating function from point $\left(\operatorname{iii}\right)$ in Lemma \ref{lma1}, where multiplying both sides by $\frac{1}{x}$ and integrating from $x=0$ to $x=1$ yields
\begin{equation*}
\frac{3}{4}\sum _{k=1}^{\infty }\frac{4^kH_k^{\left(2\right)}}{k^3\binom{2k}{k}}-\frac{3}{4}\sum _{k=1}^{\infty }\frac{4^k}{k^5\binom{2k}{k}}=\int _0^1\frac{\arcsin ^4\left(x\right)}{x}\:dx.\tag{3.1}
\label{3.1}
\end{equation*}
For the expression on the right-hand side of $\left(\ref{3.1}\right)$, by integrating by parts and then using the substitution $\arcsin \left(x\right)\mapsto x$, we get
\begin{equation*}
\int _0^1\frac{\arcsin ^4\left(x\right)}{x}\:dx=-4\int _0^{\frac{\pi }{2}}x^3\ln \left(\sin \left(x\right)\right)\:dx.\tag{3.2}
\label{3.2}
\end{equation*}
Furthermore, for the second series on the left-hand side of $\left(\ref{3.1}\right)$, we have
$$\sum _{k=1}^{\infty \:}\frac{4^k}{k^5\binom{2k}{k}}=4\int _0^1\left(\sum _{k=1}^{\infty }\frac{4^kx^{2k}}{k^2\binom{2k}{k}}\right)\frac{\ln ^2\left(x\right)}{x}\:dx,$$
and if we apply the result from point $\left(\operatorname{ii}\right)$ in Lemma \ref{lma1}, integrate by parts, and substitute $\arcsin \left(x\right)\mapsto x$, we arrive at
\begin{equation*}
\sum _{k=1}^{\infty }\frac{4^k}{k^5\binom{2k}{k}}=-\frac{16}{3}\int _0^{\frac{\pi }{2}}x\ln ^3\left(\sin \left(x\right)\right)\:dx.\tag{3.3}
\label{3.3}
\end{equation*}
By applying $\left(\ref{3.2}\right)$ and $\left(\ref{3.3}\right)$ to $\left(\ref{3.1}\right)$, and then isolating the target series, we obtain
$$\sum _{k=1}^{\infty }\frac{4^kH_k^{\left(2\right)}}{k^3\binom{2k}{k}}=-\frac{16}{3}\int _0^{\frac{\pi }{2}}x^3\ln \left(\sin \left(x\right)\right)\:dx-\frac{16}{3}\int _0^{\frac{\pi }{2}}x\ln ^3\left(\sin \left(x\right)\right)\:dx,$$
and if we employ the result from point $\left(\operatorname{vii}\right)$ in Lemma \ref{lma3} and the result from point $\left(\operatorname{ii}\right)$ in Lemma \ref{lma7}, we obtain the desired closed form for the series in point $\left(\operatorname{ii}\right)$.
\\[1.5ex]
For the series in point $\left(\operatorname{iii}\right)$, we utilize the result from point $\left(\operatorname{iii}\right)$ in Lemma \ref{lma2}, where, by multiplying both sides by $\frac{4^k}{k^2\binom{2k}{k}}$ and summing from $k=1$ to $\infty$, we obtain
$$\int _0^1\left(\sum _{k=1}^{\infty }\frac{4^kx^k}{k^2\binom{2k}{k}}\right)\frac{\ln ^2\left(1-x\right)}{x}\:dx=\sum _{k=1}^{\infty }\frac{4^kH_k^2}{k^3\binom{2k}{k}}+\sum _{k=1}^{\infty }\frac{4^kH_k^{\left(2\right)}}{k^3\binom{2k}{k}}.$$
If we use the substitution $x\mapsto x^2$, apply the result from point $\left(\operatorname{ii}\right)$ in Lemma \ref{lma1}, followed by employing the substitution $\arcsin \left(x\right)\mapsto x$, and then isolate the desired series, we arrive at
$$\sum _{k=1}^{\infty }\frac{4^kH_k^2}{k^3\binom{2k}{k}}=16\int _0^{\frac{\pi }{2}}x^2\cot \left(x\right)\ln ^2\left(\cos \left(x\right)\right)\:dx-\sum _{k=1}^{\infty }\frac{4^kH_k^{\left(2\right)}}{k^3\binom{2k}{k}},$$
and by using the result of the series in point $\left(\operatorname{ii}\right)$ and the result from point $\left(\operatorname{i}\right)$ in Lemma \ref{lma7}, the announced closed form for the series in point $\left(\operatorname{iii}\right)$ follows.
\\[1.5ex]
For the series in point $\left(\operatorname{iv}\right)$, we proceed by using the result from point $\left(\operatorname{ii}\right)$ in Lemma \ref{lma2}, if we multiply both sides by $\frac{4^k}{k\binom{2k}{k}}$ and sum from $k=1$ to $\infty$, we get
$$\int _0^1\left(\sum _{k=1}^{\infty }\frac{4^kx^k}{k\binom{2k}{k}}\right)\frac{\ln ^2\left(x\right)\ln \left(1-x\right)}{x}\:dx=-2\sum _{k=1}^{\infty }\frac{4^kH_k}{k^4\binom{2k}{k}}-2\sum _{k=1}^{\infty }\frac{4^kH_k^{\left(2\right)}}{k^3\binom{2k}{k}}-2\sum _{k=1}^{\infty }\frac{4^kH_k^{\left(3\right)}}{k^2\binom{2k}{k}}$$
$$+2\zeta \left(2\right)\sum _{k=1}^{\infty }\frac{4^k}{k^3\binom{2k}{k}}+2\zeta \left(3\right)\sum _{k=1}^{\infty }\frac{4^k}{k^2\binom{2k}{k}}.$$
By employing the substitution $x\mapsto x^2$, applying the result from point $\left(\operatorname{i}\right)$ in Lemma \ref{lma1}, followed by the substitution $\arcsin \left(x\right)\mapsto x$, and afterward isolating the target series, we arrive at
$$\sum _{k=1}^{\infty }\frac{4^kH_k}{k^4\binom{2k}{k}}=-16\int _0^{\frac{\pi }{2}}x\ln ^2\left(\sin \left(x\right)\right)\ln \left(\cos \left(x\right)\right)\:dx-\sum _{k=1}^{\infty }\frac{4^kH_k^{\left(2\right)}}{k^3\binom{2k}{k}}-\sum _{k=1}^{\infty }\frac{4^kH_k^{\left(3\right)}}{k^2\binom{2k}{k}}$$
$$+\zeta \left(2\right)\sum _{k=1}^{\infty }\frac{4^k}{k^3\binom{2k}{k}}+\zeta \left(3\right)\sum _{k=1}^{\infty }\frac{4^k}{k^2\binom{2k}{k}}.$$
After using the result from point $\left(\operatorname{ii}\right)$ in Lemma \ref{lma8}, the result of the series in points $\left(\operatorname{ii}\right)$ and $\left(\operatorname{i}\right)$, and the results of the series in points $\left(\operatorname{ii}\right)$ and $\left(\operatorname{i}\right)$ in Lemma \ref{lma4}, we arrive at the closed form of the series in point $\left(\operatorname{iv}\right)$.
\\[1.5ex]
For the series in point $\left(\operatorname{v}\right)$, we commence by using the result from point $\left(\operatorname{iv}\right)$ in Lemma \ref{lma2}. If we multiply both sides by $\frac{4^k}{k\binom{2k}{k}}$ and sum from $k=1$ to $\infty$, we obtain
$$\int _0^1\left(\sum _{k=1}^{\infty }\frac{4^kx^{k-1}}{k\binom{2k}{k}}\right)\ln \left(x\right)\ln ^2\left(1-x\right)\:dx=-\sum _{k=1}^{\infty }\frac{4^kH_k^2}{k^3\binom{2k}{k}}-\sum _{k=1}^{\infty }\frac{4^kH_k^{\left(2\right)}}{k^3\binom{2k}{k}}-2\sum _{k=1}^{\infty }\frac{4^kH_k^{\left(3\right)}}{k^2\binom{2k}{k}}$$
$$-2\sum _{k=1}^{\infty }\frac{4^kH_kH_k^{\left(2\right)}}{k^2\binom{2k}{k}}+2\zeta \left(2\right)\sum _{k=1}^{\infty }\frac{4^kH_k}{k^2\binom{2k}{k}}+2\zeta \left(3\right)\sum _{k=1}^{\infty }\frac{4^k}{k^2\binom{2k}{k}},$$
and by using the substitution $x\mapsto x^2$, applying the result from point $\left(\operatorname{i}\right)$ in Lemma \ref{lma1}, then substituting $\arcsin \left(x\right)\mapsto x$ and isolating the desired series, we get
$$\sum _{k=1}^{\infty }\frac{4^kH_kH_k^{\left(2\right)}}{k^2\binom{2k}{k}}=-16\int _0^{\frac{\pi }{2}}x\ln \left(\sin \left(x\right)\right)\ln ^2\left(\cos \left(x\right)\right)\:dx-\frac{1}{2}\sum _{k=1}^{\infty }\frac{4^kH_k^2}{k^3\binom{2k}{k}}-\frac{1}{2}\sum _{k=1}^{\infty }\frac{4^kH_k^{\left(2\right)}}{k^3\binom{2k}{k}}$$
$$-\sum _{k=1}^{\infty }\frac{4^kH_k^{\left(3\right)}}{k^2\binom{2k}{k}}+\zeta \left(2\right)\sum _{k=1}^{\infty }\frac{4^kH_k}{k^2\binom{2k}{k}}+\zeta \left(3\right)\sum _{k=1}^{\infty }\frac{4^k}{k^2\binom{2k}{k}}.$$
Therefore, if we use the result from point $\left(\operatorname{i}\right)$ in Lemma \ref{lma8}, the closed forms of the series in points $\left(\operatorname{iii}\right)$, $\left(\operatorname{ii}\right)$, and $\left(\operatorname{i}\right)$, and subsequently apply the results from points $\left(\operatorname{iii}\right)$ and $\left(\operatorname{i}\right)$ in Lemma \ref{lma4}, we arrive at the desired result for the series in point $\left(\operatorname{v}\right)$.
\\[1.5ex]
For the series in point $\left(\operatorname{vi}\right)$, we will use the result from point $\left(\operatorname{v}\right)$ in Lemma \ref{lma2}. If we multiply both sides by $\frac{4^k}{k\binom{2k}{k}}$ and sum from $k=1$ to $\infty$, we get
$$\int _0^1\left(\sum _{k=1}^{\infty }\frac{4^kx^{k-1}}{k\binom{2k}{k}}\right)\ln ^3\left(1-x\right)\:dx=-\sum _{k=1}^{\infty }\frac{4^kH_k^3}{k^2\binom{2k}{k}}-3\sum _{k=1}^{\infty }\frac{4^kH_kH_k^{\left(2\right)}}{k^2\binom{2k}{k}}-2\sum _{k=1}^{\infty }\frac{4^kH_k^{\left(3\right)}}{k^2\binom{2k}{k}},$$
and by substituting $x\mapsto x^2$, applying the result from point $\left(\operatorname{i}\right)$ in Lemma \ref{lma1}, and then substituting $\arcsin \left(x\right)\mapsto x$, we obtain
$$32\int _0^{\frac{\pi }{2}}x\ln ^3\left(\cos \left(x\right)\right)\:dx=-\sum _{k=1}^{\infty }\frac{4^kH_k^3}{k^2\binom{2k}{k}}-3\sum _{k=1}^{\infty }\frac{4^kH_kH_k^{\left(2\right)}}{k^2\binom{2k}{k}}-2\sum _{k=1}^{\infty }\frac{4^kH_k^{\left(3\right)}}{k^2\binom{2k}{k}}.$$
If we use the substitution $x\mapsto \frac{\pi }{2}-x$ and isolate the desired series, we arrive at
$$\sum _{k=1}^{\infty }\frac{4^kH_k^3}{k^2\binom{2k}{k}}=32\int _0^{\frac{\pi }{2}}x\ln ^3\left(\sin \left(x\right)\right)\:dx-16\pi \int _0^{\frac{\pi }{2}}\ln ^3\left(\sin \left(x\right)\right)\:dx-3\sum _{k=1}^{\infty }\frac{4^kH_kH_k^{\left(2\right)}}{k^2\binom{2k}{k}}-2\sum _{k=1}^{\infty }\frac{4^kH_k^{\left(3\right)}}{k^2\binom{2k}{k}},$$
and if we utilize the result from point $\left(\operatorname{ii}\right)$ in Lemma \ref{lma7}, the result from point $\left(\operatorname{i}\right)$ in Lemma \ref{lma5}, and the results of the series in points $\left(\operatorname{v}\right)$ and $\left(\operatorname{i}\right)$, we obtain the closed form for the series in point $\left(\operatorname{vi}\right)$.
\end{proof}

\bibliographystyle{amsplain}

\end{document}